\theoremstyle{plain}
\newtheorem{theorem}{Theorem}
\newtheorem{lemma}[theorem]{Lemma}
\newtheorem{proposition}[theorem]{Proposition}
\theoremstyle{definition}
\newtheorem*{remark*}{Remark}
\newcommand{\E}{\mathbf{E}}
\renewcommand{\P}{\mathbf{P}}
\newcommand{\beq}{\begin{equation}}
\newcommand{\eeq}{\end{equation}}
\newcommand{\ba}{\begin{align}}
\newcommand{\ea}{\end{align}}
\newcommand{\bas}{\begin{align*}}
\newcommand{\eas}{\end{align*}}
\newcommand{\olF}{\overline F}
\newcommand{\var}{\text{var}}
\renewcommand{\Re}{\text{Re}}
\renewcommand{\Im}{\text{Im}}
\begin{document}\title[Local limit theorem for the maximum of a random walk]
{Local limit theorem for the maximum of a random walk in the heavy-traffic regime}

\author[Kugler]{Johannes Kugler}
\address{Mathematical Institute, University of Munich,
Theresienstrasse 39, D-80333, Munich, Germany}
\email{kugler@math.lmu.de}

\date{\today }

\begin{abstract}
 Consider a family of $\Delta$-latticed aperiodic random walks $\{S^{(a)},0\le a\le a_0\}$ 
 with increments $X_i^{(a)}$ and non-positive drift $-a$. 
 Suppose that $\sup_{a\le a_0}\E[(X^{(a)})^2]<\infty$ and 
 $\sup_{a\le a_0}\E[\max\{0,X^{(a)}\}^{2+\varepsilon}]<\infty$ 
 for some $\varepsilon>0$. 
 Assume that $X^{(a)}\xrightarrow[]{w} X^{(0)}$ as $a\to 0$ and denote by 
 $M^{(a)}=\max_{k\ge 0} S_k^{(a)}$ the maximum of the random walk $S^{(a)}$.
 In this paper we provide the asymptotics of $\P(M^{(a)}=y\Delta)$ as $a\to 0$ in the case, 
 when $y\to \infty$ and $ay=O(1)$. 
 This asymptotics follows from a representation of $\P(M^{(a)}=y\Delta)$ via a geometric sum and a uniform renewal theorem, which is also proved in this paper. 
\end{abstract}

\keywords{Limit theorems, maximum, random walks, renewal theorem, geometric sum}
\subjclass{60G50, 60G70, 60K05 .}
\thanks{Supported by the DFG}
\maketitle

\section{Introduction and statement of results}

Let $\{S^{(a)}, a\in[0,a_0]\}$ denote a family of random walks with drift $-a\le 0$ and increments $X_i^{(a)}$, that is,
$$
 S^{(a)}_0:=0,\ S^{(a)}_n:=\sum_{i=1}^n X_i^{(a)},\ n\ge 1.
$$
We shall assume that $X_1^{(a)},X_2^{(a)},\ldots$ are independent copies of a random variable 
$X^{(a)}$. In the case $a=0$ we write $S$, $X_i$ and $X$ instead of $S^{(0)}$, $X_i^{(0)}$ and $X^{(0)}$ respectively.
Assume that, as $a\to 0$,
\begin{equation}
 X^{(a)} \xrightarrow[]{w} X \label{assumption_weak_conv}
\end{equation}
and 
\begin{equation}
 \sup_{a\in [0,a_0]} \E [X^{(a)}]^2 <\infty \quad \text{and} \quad
 \sup_{a\in [0,a_0]} \E [(\max\{0,X^{(a)}\})^{2+\varepsilon}] <\infty \label{assumption_moment_ex}
\end{equation}
for some $a_0, \varepsilon>0$. 
If $a>0$, the random walk $S^{(a)}$ drifts to $-\infty$ and the total maximum 
$$
 M^{(a)}:=\max_{k\geq0} S^{(a)}_k
$$
is finite almost surely. 
However, as $a\to 0$, $M^{(a)}\to \infty$ in probability. 
From this fact arises the natural question how fast $M^{(a)}$ grows as $a\to 0$.
The first result concerning this question goes back to Kingman \cite{K61}, who considered the case when 
$|X|$ has an exponential moment and proved that, as $a\to 0$, 
\beq
\label{heavy_traffic}
 \P(M^{(a)}>y) \sim e^{-2ay/\sigma^2}
\eeq
for all fixed values $y\ge 0$, where $\sigma^2=\mathbf{Var}(X)$ denotes the variance of the increments in the case of zero-drift.
Prokhorov \cite{P63} extended this result to the case that the increments have finite variance.
Kingman and Prokhorov had a motivation for examining $M^{(a)}$ that comes from queueing theory: 
It is well known that a stationary distribution of the waiting time of a customer in a single-server 
first-come-first-served (GI/GI/1) queue coincides with that of the maximum of a corresponding random walk.
In the context of queueing theory, the limit $a\to 0$ means that the traffic load tends to 1. 
Thus, the question on the distribution of $M^{(a)}$ may been seen as the question on the growth rate of a stationary waiting-time 
distribution in a GI/GI/1 queue. This is one of the most important questions in queueing theory and is usually referred to as heavy-traffic analysis.

Another interesting question is whether \eqref{heavy_traffic} remains valid, if we do not fix the value $y$, but consider $y=y(a)\to \infty$ as $a\to 0$ sufficiently slow.
Olvera-Cravioto, Blanchet and Glynn \cite{OBG} showed that, if the increments possess regular varying tails with index $r>2$, there exists a critical value $y(a)\approx \sigma^2(r-2)a^{-1}\ln a^{-1}/2$, under which the heavy traffic approximation holds. 
Denisov and Kugler \cite{DK14} (see also \cite{BL14}) identified the critical value for general subexponential distributions, e.g. $y(a) \approx a^{-1/(1-\gamma)}$ in the Weibull case, where $\gamma\in (0,1)$ is the parameter of the Weibull distribution.

In this paper we assume that $X^{(a)}$ possesses a $\Delta$-lattice distribution, that means there exists some $\Delta>0$ such that $\P(X\in \Delta \mathbb Z)=1$ and $\Delta$ is the maximal positive number with this property. Let us assume without loss of generality that $\Delta$ is an integer. 
Our main result is a local limit theorem for the probability $\P(M^{(a)}=y\Delta)$ as $a\to 0$ for
$y$ such that $y\to \infty$ and $ay=O(1)$ under the assumption that the increments possess an aperiodic lattice distribution with zero-shift. 
The main idea for our proof is to find a a representation of the probability $\P(M^{(a)}=y\Delta)$ as a geometric sum and to derive and apply a uniform renewal theorem to find the asymptotic behaviour of this sum. This uniform renewal theorem will be a generalization of a result attained by Nagaev \cite{N68}. 

It is worth mentioning that the approach used in this paper is similar to the method used in \cite{BL14}, 
where the authors use
the well-known representation of $\P(M^{(a)}>y)$ as a geometric sum of independent random variables (see for example \cite{A00}) and a uniform renewal theorem from \cite{BG07} to establish the asymptotic behaviour of $\P(M^{(a)}>y)$ as $a\to 0$ and $y\to \infty$ for subexponential distributions.
In \cite{BG07} there is also a uniform renewal theorem used to develop asymptotic expansions of the distribution of a geometric sum.

We now state our main result.
\begin{theorem}
\label{T1}
 Assume that \eqref{assumption_weak_conv} and \eqref{assumption_moment_ex} hold and 
 suppose that $X^{(a)}$ possesses an aperiodic $\Delta$-lattice distribution for $a$ small enough.
 Then, as $a\to 0$,
 \begin{align}
  \P(M^{(a)}=y\Delta) 
  &\sim \frac{2a\Delta}{\sigma^2}\exp\left\{-\frac{2a y\Delta}{\sigma^2}\right\} \label{T1.2}
 \end{align}
 uniformly for all $y$ such that $y\to \infty$ and $ay=O(1)$ as $a\to 0$. 
\end{theorem}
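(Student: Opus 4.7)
The plan is the one the author announces: represent $\P(M^{(a)}=y\Delta)$ as a defective renewal sum via the ladder decomposition, analyse it with a uniform local renewal theorem after an exponential tilt, and match constants using the Kingman--Prokhorov heavy-traffic asymptotic \eqref{heavy_traffic}. To carry out the first step, let $\tau^{(a)}=\inf\{n\ge 1:S^{(a)}_n>0\}$, $\chi^{(a)}=S^{(a)}_{\tau^{(a)}}$, $p_a=\P(\tau^{(a)}<\infty)\in(0,1)$, and let $F_a^\ast$ be the law of $\chi^{(a)}$ conditional on $\{\tau^{(a)}<\infty\}$, a probability on $\Delta\mathbb{N}$. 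The classical ladder decomposition of $M^{(a)}$ as a geometric sum of i.i.d.\ copies of $\chi^{(a)}$ gives
\begin{equation*}
 \P(M^{(a)}=y\Delta)=(1-p_a)\sum_{k\ge 0}p_a^k(F_a^\ast)^{*k}(\{y\Delta\})=(1-p_a)\,U_a(\{y\Delta\}),
\end{equation*}
where $U_a$ is the defective renewal measure of the sub-probability $p_aF_a^\ast$.

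For the second step, pick $\theta_a>0$ solving the Cramér equation $p_a\int e^{\theta x}F_a^\ast(dx)=1$; this is well-defined because the $(2+\varepsilon)$ moment in \eqref{assumption_moment_ex} makes the Laplace transform finite on some right neighbourhood of $0$, and one easily checks $\theta_a\to 0$ as $a\to 0$. The tilted measure $\tilde F_a^\ast(dx):=p_ae^{\theta_ax}F_a^\ast(dx)$ is a genuine probability, and summing the elementary identity $p_a^k(F_a^\ast)^{*k}(dx)=e^{-\theta_ax}(\tilde F_a^\ast)^{*k}(dx)$ over $k$ produces
\begin{equation*}
 U_a(\{y\Delta\})=e^{-\theta_ay\Delta}\,\tilde U_a(\{y\Delta\}),
\end{equation*}
with $\tilde U_a$ the proper renewal measure of $\tilde F_a^\ast$. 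Invoking the paper's uniform lattice renewal theorem then yields
\begin{equation*}
 \tilde U_a(\{y\Delta\})=\frac{\Delta}{\tilde\mu_a}\bigl(1+o(1)\bigr)
\end{equation*}
uniformly as $a\to 0$ and $y\to\infty$, where $\tilde\mu_a$ is the mean of $\tilde F_a^\ast$. Aperiodicity of $\chi^{(a)}$ on $\Delta\mathbb{N}$ is inherited from the assumed aperiodicity of $X^{(a)}$, and uniform second-moment bounds for $\tilde F_a^\ast$ follow from \eqref{assumption_moment_ex} together with $\theta_a\to 0$.

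To identify the constants, note that \eqref{heavy_traffic} and the uniform integrability supplied by \eqref{assumption_moment_ex} yield $\E M^{(a)}\sim\sigma^2/(2a)$. Wald's identity applied to the geometric sum gives $\E M^{(a)}=p_a\mu_a/(1-p_a)$ with $\mu_a:=\int x\,F_a^\ast(dx)$, hence $(1-p_a)/\mu_a\sim 2a/\sigma^2$. Expanding the Cramér equation to first order, $1/p_a=1+\theta_a\mu_a+O(\theta_a^2)$, produces $\theta_a\sim 2a/\sigma^2$, and $\tilde\mu_a=\mu_a(1+o(1))$ because $\theta_a\to 0$ with uniformly bounded second moments. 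Combining,
\begin{equation*}
 \P(M^{(a)}=y\Delta)\sim(1-p_a)\frac{\Delta}{\tilde\mu_a}e^{-\theta_ay\Delta}\sim\frac{2a\Delta}{\sigma^2}\exp\!\Big(-\frac{2ay\Delta}{\sigma^2}\Big);
\end{equation*}
replacing $\theta_a$ by $2a/\sigma^2$ inside the exponent is legitimate because $|(\theta_a-2a/\sigma^2)y\Delta|=o(ay\Delta)=o(1)$ on the regime $ay=O(1)$.

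The main obstacle is not the bookkeeping above but the uniformity of the renewal theorem over the $a$-dependent tilted family $\{\tilde F_a^\ast\}$ throughout the full range $y\le C/a$. A Fourier/Nagaev-type inversion demands a uniform spectral gap of the form $|\hat{\tilde F}_a^\ast(t)|\le 1-\delta$ for $t$ bounded away from the lattice frequencies $2\pi k/\Delta$, uniformly in small $a$; extracting this from the aperiodicity hypothesis combined with \eqref{assumption_weak_conv}, and then transferring these properties from $X^{(a)}$ to the ladder increment $\chi^{(a)}$, is the technical core of the proof and is precisely the uniform renewal theorem flagged in the abstract.
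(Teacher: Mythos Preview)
Your overall architecture matches the paper's: ladder decomposition into a geometric sum, a uniform renewal theorem applied after an exponential tilt, and identification of constants. There is, however, a genuine gap in the tilting step.

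You assert that the Cram\'er equation $p_a\int e^{\theta x}F_a^\ast(dx)=1$ has a solution $\theta_a>0$ because ``the $(2+\varepsilon)$ moment in \eqref{assumption_moment_ex} makes the Laplace transform finite on some right neighbourhood of $0$.'' This is false: moment conditions never imply exponential moments. If $X^{(a)}$ is subexponential (regularly varying, Weibull with shape below $1$, etc.), the ladder height $\chi^{(a)}$ is subexponential as well, $\E[e^{\theta\chi^{(a)}}]=\infty$ for every $\theta>0$, and no Cram\'er root exists. The paper's Proposition~\ref{P1} sidesteps this precisely by working with the \emph{truncated} generating function $f_y^{(b)}(z)=\sum_{k\le y}\P(Z^{(b)}=k\Delta)z^k$: the tilt parameter $\lambda_{y\Delta}^{(b)}(A)$ is the root of the polynomial equation $Af_y^{(b)}(z)=1$, which always exists, and the truncation level moves together with the point $y$ at which the renewal function is evaluated. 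This truncation is not a cosmetic device; without it your tilted measure $\tilde F_a^\ast$ need not be defined at all.

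Two secondary points. First, the ladder height inherits only a $(1+\varepsilon)$-moment from \eqref{assumption_moment_ex} via Chow's inequality, not a second moment; the paper runs its renewal theorem with $s=1+\varepsilon$, so your expansion $1/p_a=1+\theta_a\mu_a+O(\theta_a^2)$ and your ``uniform second-moment bounds for $\tilde F_a^\ast$'' overclaim what is available. Second, your route to $(1-p_a)/\mu_a\sim 2a/\sigma^2$ through $\E M^{(a)}\sim\sigma^2/(2a)$ presupposes a uniform integrability statement for $M^{(a)}$ that you have not established; the paper instead obtains the constant directly from the duality $1-p_a=1/\E[\tau_-^{(a)}]$, Wald's identity $\E[S^{(a)}_{\tau_-^{(a)}}]=-a\,\E[\tau_-^{(a)}]$, and the classical relation $\sigma^2/2=-\mu^{(0)}\E[S_{\tau_-^{(0)}}]$.
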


In the non-local case, 
it is known (see for example Wachtel and Shneer \cite{WS11}) that one only needs to assume
$\lim_{a\to 0} \mathbf{Var} X^{(a)}=\sigma^2 \in (0,\infty)$ and 
a Lindeberg-type condition
$$
 \lim_{a\to 0} \E[(X^{(a)})^2;|X_1^{(a)}|>K/a] =0 \quad \text{for all } K>0
$$
to establish \eqref{heavy_traffic}.
This means that we must make stronger assumptions to establish our local result than it is needed in the non-local case.

Obviously,  Theorem \ref{T1} restates the heavy traffic asmyptotics \eqref{heavy_traffic}: 
As $a\to \infty$,
\begin{align*}
 \P(M^{(a)}\ge y\Delta) 
 &= \sum_{x=y}^\infty  \P(M^{(a)} = x\Delta)
 \sim \frac{2a\Delta}{\sigma^2} \sum_{x=y}^\infty e^{-2ax\Delta/\sigma^2}\\
 &= \frac{2a\Delta}{\sigma^2} \frac{e^{-2ay\Delta/\sigma^2}}{1-e^{-2ay\Delta/\sigma^2}}
 \sim e^{-2ay\Delta/\sigma^2}
\end{align*}
for all $y$ such that $y\to \infty$ and $ay = O(1)$ as $a\to 0$.


\section{Uniform renewal theorem}

In this section we prove a modification of Theorem 1 in Nagaev \cite{N68} which is, unlike the uniform renewal theorem from Nagaev, even uniform in the expected value. This renewal theorem is the key to the proof of our main result. 

Consider a family of non-negative $\Delta$-latticed and aperiodic random variables $\{Z^{(b)}, b \in I\}$ 
with $\E[Z^{(b)}]=b> 0$ and a non-empty set $I\subseteq \mathbb{R}$ that contains at least one accumulation point.
Denote by $F^{(b)}$ the distribution function of $Z^{(b)}$ and by $F_k^{(b)}$ the k-fold convolution of 
$F^{(b)}$ with itself. Let 
$$
 H(x,b,A) = \sum_{k=0}^\infty A^k F_k^{(b)}(x), \quad A>0.
$$
In renewal theory one usually studies the asymptotic behavior of $H(x+h,b,1) - H(x,b,1)$, $h>0$. 
However, the case $A\neq 1$ is also of great interest. 
Nagaev's motivation for studying the case $A \neq 1$ comes from branching processes, 
since there arises a need for an asymptotic representation for $H(x+h,b,A) - H(x,b,A)$ as $x\to \infty$
with an estimate for the remainder term which is uniform in $A$. 
For our purposes we seek a representation for $H(x+h,b,A) - H(x,b,A)$ as $x\to \infty$ and the estimate for the remainder shall be uniform in $A$ and $b$.
Assume that there exists some $s>1$ such that 
\begin{equation}
\label{moment_ex_Z}
 \sup_{b\in I} \E[(Z^{(b)})^s]<\infty.
\end{equation}
Put
$$
 f_{k\Delta}^{(b)}=F^{(b)}(k\Delta)-F^{(b)}((k-1)\Delta), \quad f_y^{(b)}(z)=\sum_{k=0}^y f_{k\Delta}^{(b)} z^{k}, 
$$
$$
 \mu_y^{(b)}(z) = f_y^{(b)'}(z) = \sum_{k=1}^y k f_{k\Delta}^{(b)} z^{k-1}.
$$

\begin{proposition}
\label{P1}
Let $\lambda_{y\Delta}^{(b)}(A)$ be the real non-negative root of the equation $Af_{y}^{(b)}(z)\\=1$. 
Assume that \eqref{moment_ex_Z} holds for some $s > 1$. 
Then, there exists a positive constant $\alpha$ for every accumulation point $b_0$ of $I$, such that
 \beq
 \label{P1.3}
   \sum_{k=1}^\infty A^k \left(F_k^{(b)}(y \Delta)-F_k^{(b)}((y-1)\Delta)\right)
  = \frac{\big(\lambda_{y\Delta}^{(b)}(A)\big)^{-y-1}}{A \mu_{y\Delta}^{(b)}(\lambda_{y\Delta}^{(b)}(A))} 
    + o(y^{-\min\{1,s-1\}}\ln y) 
 \eeq
 uniformly in $b\in I\cap \{b\in I: |b-b_0|\le \alpha\}$ and $A_y\le A\le 1$, where 
 \beq
 \label{P1.4}
   A_y = 1-C/y
 \eeq
 with a fixed positive number $C$.
\end{proposition}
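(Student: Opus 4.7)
The plan is a refinement of Nagaev's contour-integral approach, but tracked carefully enough to get uniformity in $b$ and $A$. The starting observation is that, because $Z^{(b)}$ is supported on $\Delta\mathbb{Z}_{\ge0}$, for every $k\ge 1$ one has $\P(Z_1^{(b)}+\cdots+Z_k^{(b)}=y\Delta)=[z^y]\,f_y^{(b)}(z)^k$: in a $k$-fold product only the coefficients $f_{0}^{(b)},\ldots,f_{y\Delta}^{(b)}$ can contribute to the $z^y$-coefficient. Summing the geometric series and applying Cauchy's formula,
$$
\sum_{k=1}^\infty A^k\bigl(F_k^{(b)}(y\Delta)-F_k^{(b)}((y-1)\Delta)\bigr)
=\frac{1}{2\pi i}\oint_{|z|=r}\frac{A f_y^{(b)}(z)\,z^{-y-1}}{1-A f_y^{(b)}(z)}\,dz
$$
for any radius $r$ strictly smaller than $\lambda:=\lambda_{y\Delta}^{(b)}(A)$. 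Since $f_y^{(b)}$ is a polynomial with non-negative coefficients, $\lambda$ is the unique zero of $1-A f_y^{(b)}$ of smallest modulus, and it is simple.

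Next I would deform the contour outward to $|z|=\lambda\, e^{c/y}$ for a fixed $c>0$. The residue theorem produces the leading term
$$
-\operatorname*{Res}_{z=\lambda}\frac{A f_y^{(b)}(z)\,z^{-y-1}}{1-A f_y^{(b)}(z)}
=\frac{\lambda^{-y-1}}{A\,\mu_{y\Delta}^{(b)}(\lambda)},
$$
matching the first term of \eqref{P1.3}, and the task reduces to bounding the integral on the enlarged contour by $o(y^{-\min\{1,s-1\}}\ln y)$. The factor $|z|^{-y-1}$ on the new contour is $\lambda^{-y-1}e^{-c(1+o(1))}$, which absorbs the growth of the leading term, so what remains is to control $|1-A f_y^{(b)}(z)|^{-1}$ on this circle.

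I would split the integration in the angular variable $\theta=\arg z$ into a neighborhood $|\theta|\le \delta$ of the positive real axis and its complement. On the complementary arc the aperiodicity of $Z^{(b)}$ gives $|f^{(b)}(e^{i\theta})|\le 1-\eta(\delta)$; by weak convergence $Z^{(b)}\Rightarrow Z^{(b_0)}$ and tightness coming from \eqref{moment_ex_Z}, this bound extends uniformly to all $b$ with $|b-b_0|\le\alpha$ once $\alpha$ is chosen small enough (this fixes the constant $\alpha$ in the statement). The truncation error $|f^{(b)}(z)-f_y^{(b)}(z)|$ on $|z|=1+O(1/y)$ is $O(y^{-s})$ by Markov's inequality applied to \eqref{moment_ex_Z}. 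On the inner arc near $\theta=0$ I would use a Taylor expansion about $z=\lambda$: since the linear coefficient $A\lambda\,\mu_{y\Delta}^{(b)}(\lambda)$ is bounded away from zero uniformly in $(b,A)$ (it tends to $b_0$), one obtains $|1-A f_y^{(b)}(\lambda e^{i\theta}e^{c/y})|\gtrsim |\theta|+c/y$. Integrating produces the $\ln y$ factor, and combining with the moment-derived smallness yields the stated $o(y^{-\min\{1,s-1\}}\ln y)$.

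The main obstacle is the regime $A\uparrow 1$, in which $\lambda\downarrow 1$ and the pole approaches the unit circle, where aperiodicity alone governs the smoothness of the integrand. One must control the asymptotics of $\lambda-1$, of $\mu_{y\Delta}^{(b)}(\lambda)$ (which tends to $\mu_{y\Delta}^{(b)}(1)\approx b$), and of the second derivative of $f_y^{(b)}$ at $\lambda$ simultaneously in $(A,b,y)$; the restriction $A\ge 1-C/y$ is precisely what keeps $\lambda^y$ bounded and prevents the residue from blowing up. Uniformity in $b$ is extracted from \eqref{assumption_weak_conv}-\eqref{assumption_moment_ex}: weak convergence plus a uniform $s$-th moment forces uniform convergence of the first two moments of $Z^{(b)}$ on a small ball around $b_0$, which is what the argument for the inner arc requires. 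Once these three layers of uniformity are secured, the rest of the proof follows the scheme of Nagaev \cite{N68} with the truncation bookkeeping needed to convert $s$-th moment into the $o(y^{-\min\{1,s-1\}}\ln y)$ error.
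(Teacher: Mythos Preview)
Your outline captures the right overall architecture --- Cauchy's formula, contour deformation to a circle of radius $1+O(1/y)$, residue at $\lambda$, and a near/far split on the remaining contour --- and this is indeed the skeleton of the paper's proof (which follows Nagaev). The identification of how $\alpha$ arises, via uniform aperiodicity on the far arc, is also correct.

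However, there is a genuine gap in the bound on the contour integral. On your circle $|z|=\lambda e^{c/y}$ you have $|z|^{-y-1}\asymp \lambda^{-y-1}$, which in the regime $A=1-O(1/y)$ is merely $O(1)$, not small. Your lower bound $|1-Af_y^{(b)}(z)|\gtrsim |\theta|+c/y$ on the near arc then gives only
\[
\int_{|\theta|\le\delta}\frac{d\theta}{|\theta|+c/y}\ \asymp\ \ln y,
\]
so the naive estimate on the contour integral is $O(\ln y)$, not $o\!\bigl(y^{-\min\{1,s-1\}}\ln y\bigr)$. The phrase ``combining with the moment-derived smallness'' does not fix this: the truncation error $|f^{(b)}-f_y^{(b)}|=O(y^{-s})$ is used only to transfer aperiodicity from $f^{(b)}$ to $f_y^{(b)}$ on the far arc, and contributes no smallness to the near-arc integral.

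The paper closes this gap with two additional devices that your proposal omits. First, it subtracts from $\bigl(1-Af_y^{(b)}(z)\bigr)^{-1}$ the reciprocal of its linear Taylor approximation $\psi_y^{(b)}(z)=1-Af_y^{(b)}(r_y)-A\mu_y^{(b)}(r_y)(z-r_y)$; the integral $\oint z^{-y-1}/\psi_y^{(b)}(z)\,dz$ over the full circle is shown to vanish exactly by a residue computation, and the remainder $\varphi_y^{(b)}(z)/\bigl((1-Af_y^{(b)})\psi_y^{(b)}\bigr)$ carries an extra factor $|z-r_y|^2$ in the numerator. Second, and crucially, the paper exploits the oscillation of $z^{-y-1}$ via Taibleson's estimate for Fourier coefficients of functions of bounded variation, gaining a factor $1/y$: the near-arc integral is bounded by $O\bigl(y^{-1}\operatorname{var}(\omega_y^{(b)})\bigr)$, and the variation of $\omega_y^{(b)}=\varphi_y^{(b)}/\bigl((1-Af_y^{(b)})\psi_y^{(b)}\bigr)$ is then shown to be $O(y^{\max\{0,2-s\}}\ln y)$. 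Without the $\psi$-subtraction, the variation of $(1-Af_y^{(b)})^{-1}$ on the near arc is of order $y$ and Taibleson gives nothing; without Taibleson, even the subtracted integrand gives only $O(\ln y)$. You need both.
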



\subsection{Proof of the uniform renewal theorem}

Although the uniform renewal theorem is a generalization of Theorem 1 in Nagaev \cite{N68}, the main idea of the proof is the same. However, for reasons of completeness, we give the whole proof.

Let us assume without loss of generality $\Delta=1$, $I=[0,b_1]$ with $b_1>0$ and that $y$ is sufficiently 
large in this section, even if it is not explicitely mentioned.
Throughout the following $\int_a^b g(x) dF^{(b)}(x)$ is to be interpreted as $\int_{a+}^{b+} g(x) dF^{(b)}(x)$.

\begin{lemma}
\label{L2} 
 Assume that \eqref{moment_ex_Z} holds for some $I$ and $s>1$. 
 Put $\mu^{(b)}=\E[Z^{(b)}]$, $b\in I$, and 
 $U_y(\delta)= \{z: 1\le |z| \le e^{h_y}, |\arg z|\le \delta \}$ for some $h_y=O(1/y)$.
 Then,
 \begin{equation}
 \label{L2_1}
  \lim_{\delta\to 0} \lim_{y\to \infty} \sup_{b\in I, z\in U_y(\delta)} |\mu_y^{(b)}(z)-\mu^{(b)}| = 0.
 \end{equation}
\end{lemma}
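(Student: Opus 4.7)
The plan is to split $\mu_y^{(b)}(z)-\mu^{(b)}$ into a ``truncation difference'' and a tail. Writing $p_k^{(b)}:=\P(Z^{(b)}=k)$, we have
\[
 \mu_y^{(b)}(z)-\mu^{(b)}
  = \sum_{k=1}^{y} k p_k^{(b)}\bigl(z^{k-1}-1\bigr)
  - \sum_{k=y+1}^{\infty} k p_k^{(b)}.
\]
By Markov's inequality and \eqref{moment_ex_Z}, the tail satisfies $\sum_{k>y} k p_k^{(b)} \le y^{1-s} \sup_{b\in I}\E[(Z^{(b)})^s]$, which vanishes uniformly in $b\in I$ since $s>1$. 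Only the first sum requires real work.

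For that sum I would introduce an auxiliary cutoff $K$ (to be sent to infinity only at the very end) and split at $k=K$. Writing $z=re^{i\theta}$ with $1\le r\le e^{h_y}$ and $|\theta|\le\delta$, the identity $z^{k-1}-1=(z-1)\sum_{j=0}^{k-2}z^j$ together with $|z-1|\le|r-1|+r|\theta|=O(h_y+\delta)$ yields
\[
 |z^{k-1}-1| \le C_1 (k-1) e^{(k-2) h_y}(h_y+\delta).
\]
Since $h_y y=O(1)$, the exponential factor is bounded for all $k\le y$. Hence for $k\le K$ we get $|z^{k-1}-1|\le C_2 K(h_y+\delta)$, so the head contribution is at most $C_2 K(h_y+\delta)\sum_{k\le K} k p_k^{(b)}\le C_2 b_1 K(h_y+\delta)$. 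For $K<k\le y$, use instead the cruder bound $|z^{k-1}-1|\le|z|^{k-1}+1\le e^{h_y y}+1=O(1)$ together with the same moment tail estimate to obtain a contribution of order $K^{1-s}$ uniformly in $b\in I$.

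Collecting the three estimates yields
\[
 \sup_{b\in I,\,z\in U_y(\delta)}|\mu_y^{(b)}(z)-\mu^{(b)}|
 \le C_3 K(h_y+\delta) + C_4 K^{1-s} + C_5 y^{1-s}.
\]
For each fixed $\delta$ and $K$, sending $y\to\infty$ (so $h_y\to 0$ and $y^{1-s}\to 0$) leaves $C_3 K\delta+C_4 K^{1-s}$; sending $\delta\to 0$ leaves $C_4 K^{1-s}$; and since $K$ was arbitrary and $s>1$, letting $K\to\infty$ gives \eqref{L2_1}.

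The only real subtlety, and the reason for introducing the cutoff $K$, is that \eqref{moment_ex_Z} only guarantees $\E[(Z^{(b)})^s]<\infty$ for some $s>1$ which may be strictly less than $2$; a direct bound of the first sum by a second moment of $Z^{(b)}$ is therefore not available. The two-scale split bypasses this by handling large $k$ through the moment tail and small $k$ through the continuity of $z\mapsto z^{k-1}$ at $z=1$, with the order of limits $y\to\infty$, $\delta\to 0$, $K\to\infty$ chosen precisely to make each piece small in turn.
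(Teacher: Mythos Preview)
Your proof is correct and follows essentially the same approach as the paper: both split $\mu_y^{(b)}(z)-\mu^{(b)}$ into a tail $\sum_{k>y}kp_k^{(b)}$ and a main sum $\sum_{k\le y}kp_k^{(b)}(z^{k-1}-1)$, then split the main sum at an intermediate cutoff, controlling the low part via $|z^{k-1}-1|\le (k-1)|z|^{k-1}|z-1|$ and the high part via the $s$-th moment tail. The only cosmetic difference is that the paper chooses its cutoff $N=(2\delta+h_y)^{-1/3}$ as an explicit function of $\delta$ and $h_y$ (yielding a single bound of order $\delta^{1/3}+\delta^{(s-1)/3}+o(1)$), whereas you keep $K$ as a free parameter and dispose of it by an extra $K\to\infty$ limit after $\delta\to0$; both routes are equivalent.
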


\begin{proof}
First of all,
\begin{align}
 |\mu_y^{(b)}(z)-\mu^{(b)}|
 &=\left|\int_0^y xz^{x-1} dF^{(b)}(x) - \int_{0}^\infty x dF^{(b)}(x)\right|\nonumber\\
 &\le \int_0^{y} x|z^{x-1}-1| dF^{(b)}(x)+\int_{y}^\infty x dF^{(b)}(x). \label{eq_0_6_0}
\end{align}
When $x,|z| \ge 1$, one can easily see by Taylor's approximation that  
$$
 |z^{x-1}-1| \le x|z-1||z|^x.
$$
Using this estimate we obtain for all $z\in U_y(\delta)$ and $N\le y$,
\begin{align*}
 \int_0^N x|z^{x-1}-1| dF^{(b)}(x) 
 &\le |z-1| \int_0^N x^2|z|^x dF^{(b)}(x)\\
 &\le |z-1|e^{h_y y} \int_0^N x^2 dF^{(b)}(x)
 \le N^2|z-1|e^{h_y y}.
\end{align*}
Further, a straightforward trigonometric calculation shows that for $\delta$ sufficiently small,
$$
 |z-1|
 \le |z-e^{i \arg z}| + |1-e^{i \arg z}|
 = |z|-1 + \sqrt{2(1-\cos(\arg z))}
 \le e^{h_y}-1 + 2\delta
$$
for all $z\in U_y(\delta)$ and hence, as $y\to \infty$,
$$
 \int_0^N x|z^{x-1}-1| dF^{(b)}(x)
 \le e^{h_y y} N^2 (e^{h_y}-1 + 2\delta)
 = e^{h_y y} N^2 (2\delta+h_y+o(h_y))
$$
uniformly in $b\in I$. At the same time, for $z\in U_y(\delta)$, 
assumption \eqref{moment_ex_Z} and $h_y y = O(1)$ imply that there exists an absolute number $K>0$ such that
for all $N\le y$,
\begin{align*}
 \int_N^{y} x|z^{x-1}-1| dF^{(b)}(x)
 &\le (1+e^{h_y y}) \int_N^{y} x dF^{(b)}(x)\\
 &\le \frac{1+e^{h_y y}}{N^{s-1}} \int_N^\infty x^{s} dF^{(b)}(x)
 \le K N^{1-s}
\end{align*}
and by setting $N=(2\delta+h_y)^{-1/3}$ and choosing $K_1$ such that $e^{h_y y}\le K_1$ (which is possible due to the assumption $h_y=O(1/y)$), 
we attain 
\begin{align}
 \int_0^{y} x|z^{x-1}-1| dF^{(b)}(x)
 &\le e^{h_y y}(2\delta+h_y)^{1/3} + K (2\delta+h_y)^{(s-1)/3} + o(h_y) \nonumber\\
 &\le 2^{1/3}K_1\delta^{1/3} + K 2^{(s-1)/3} \delta^{(s-1)/3} + o(1)\label{eq_0_7} 
\end{align}
uniformly in $b \in I$ as $y\to \infty$. 
Plugging the \eqref{eq_0_7} into \eqref{eq_0_6_0} and using \eqref{moment_ex_Z} once more, we conclude
\begin{align}
 |\mu_y^{(b)}(z)-\mu^{(b)}| 
 \le 2^{1/3}K_1 \delta^{1/3} + K 2^{(s-1)/3} \delta^{(s-1)/3} + o(1) \label{first_limit}
\end{align}
uniformly in $b\in I$ as $y\to \infty$. 
\end{proof}


\begin{lemma}
\label{L3}
Assume that \eqref{moment_ex_Z} holds for some $I$ and $s>1$. 
Then, for large enough $y$, $\lambda_y^{(b)}(A)<e^{h_y}$ for all $A_y \le A \le 1$ and $b\in I$, 
where $A_y = 1-C/y$ with some constant $C>0$ and $h_y=C_1/(\mu^{(0)} y)$ with $C_1 > C\mu^{(0)}/\inf_{b\in I}\mu^{(b)}$.
\end{lemma}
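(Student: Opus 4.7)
The polynomial $f_y^{(b)}(z)=\sum_{k=0}^y f_k^{(b)} z^k$ has non-negative coefficients, so it is non-decreasing on $[0,\infty)$. Since $f_y^{(b)}(1)=F^{(b)}(y)\le 1$ and $f_y^{(b)}(z)\to\infty$ as $z\to\infty$, the root $\lambda_y^{(b)}(A)$ of $Af_y^{(b)}(z)=1$ exists in $[1,\infty)$ and is non-increasing in $A$. It therefore suffices to establish the bound at the endpoint $A=A_y=1-C/y$, and by monotonicity of $f_y^{(b)}$ the claim $\lambda_y^{(b)}(A_y)<e^{h_y}$ is equivalent to the strict inequality
$$
 A_y\,f_y^{(b)}(e^{h_y})>1.
$$

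The plan is to bound $f_y^{(b)}(e^{h_y})$ from below via the elementary estimate $e^{kh_y}\ge 1+kh_y$, which gives
$$
 f_y^{(b)}(e^{h_y})\ \ge\ F^{(b)}(y) + h_y\sum_{k=0}^y k f_k^{(b)}
 \ =\ F^{(b)}(y) + h_y\bigl(\mu^{(b)} - \E[Z^{(b)};Z^{(b)}>y]\bigr).
$$
The assumption \eqref{moment_ex_Z} together with Markov's inequality then yields uniform tail bounds
$$
 1-F^{(b)}(y)\ \le\ K y^{-s},\qquad \E[Z^{(b)};Z^{(b)}>y]\ \le\ K y^{1-s},
$$
valid for all $b\in I$. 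Since $h_y=O(1/y)$ and $s>1$, both error contributions are $o(1/y)$, so one obtains
$$
 f_y^{(b)}(e^{h_y})\ \ge\ 1 + h_y\mu^{(b)} + o(1/y)\qquad\text{uniformly in }b\in I.
$$

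The argument concludes by comparing with $A_y^{-1}=1+C/y+O(1/y^2)$. The hypothesis $C_1>C\mu^{(0)}/\inf_{b\in I}\mu^{(b)}$ supplies an $\eta>0$ with
$$
 h_y\mu^{(b)}\ =\ \frac{C_1\mu^{(b)}}{\mu^{(0)}y}\ \ge\ \frac{C+\eta}{y}\qquad\text{for every } b\in I,
$$
and hence $A_y f_y^{(b)}(e^{h_y})\ge (1-C/y)(1+(C+\eta)/y+o(1/y))=1+\eta/y+o(1/y)>1$ for all $y$ large enough and uniformly in $b\in I$. The only subtlety is that every tail and moment estimate must be uniform in $b$, which is precisely what assumption \eqref{moment_ex_Z} was tailored to provide; the strictness of the inequality on $C_1$ is exactly what guarantees a positive gap that survives the $o(1/y)$ remainder.
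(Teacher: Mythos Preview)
Your proof is correct and follows essentially the same approach as the paper. Both arguments rest on the linearization $e^{t}\ge 1+t$ (equivalently $\lambda^x-1\ge x(\lambda-1)$) together with the uniform tail bounds coming from \eqref{moment_ex_Z}; the only organizational difference is that the paper bounds $\lambda_y^{(b)}(A)-1$ from above via the defining equation and then invokes $1+h_y\le e^{h_y}$, whereas you evaluate $f_y^{(b)}(e^{h_y})$ directly and show it exceeds $1/A_y$, using monotonicity in $A$ to reduce to the endpoint.
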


\begin{proof}
We want to estimate the difference $\lambda_y^{(b)}(A)-1$. 
First of all, by regarding the definition of $\lambda_y^{(b)}(A)$,
\begin{align*}
 &\int_{0-}^y \left((\lambda_y^{(b)}(A))^x-1\right) dF^{(b)}(x)
 = f_y^{(b)}(\lambda_y^{(b)}(A))- \int_{0-}^y dF^{(b)}(x)\\
 &\hspace{4cm}= \frac{1}{A} - 1 + \int_y^\infty dF^{(b)}(x)
 = \frac{1-A}{A} + \int_y^\infty dF^{(b)}(x).
\end{align*}
Further, $\lambda_y^{(b)}(A)\ge 1$ for $A\le 1$ and therefore by the binomial formula,
$$
 (\lambda_y^{(b)}(A))^x-1
 \ge x(\lambda_y^{(b)}(A)-1)\quad , x\ge 0. 
$$
Thus, uniformly in $b\in I$,
\begin{align}
 (\lambda_y^{(b)}(A)-1) \int_{0-}^y x dF^{(b)}(x)
 &\le \int_{0-}^y \left((\lambda_y^{(b)}(A))^x-1\right) dF^{(b)}(x)\nonumber\\
 &= \frac{1-A}{A} + \int_y^\infty dF^{(b)}(x)  
 = \frac{1-A}{A} + O(y^{-s}), \label{eq_0_9}
\end{align}
where we used \eqref{moment_ex_Z} in the laste line.
The condition $A_y\le A\le 1$ implies that 
$
 1-A \le C/y,
$ 
hence 
$$
 \frac{1}{A} = 1+ \frac{1-A}{A} = 1+O\left(\frac{1}{y}\right)
$$
and consequently
\begin{equation}
\label{eq_0_10}
 \frac{1-A}{A} \le \frac{C}{A y} = \frac{C}{y} + O\left(\frac{1}{y^2}\right).
\end{equation}
From the inequalities \eqref{eq_0_9}, \eqref{eq_0_10} and \eqref{moment_ex_Z} we conclude that
\begin{align*}
 \lambda_y^{(b)}(A)-1 
 &\le \frac{C/y+O(y^{-2})+O(y^{-s})}{\mu^{(b)}-\int_y^\infty x dF^{(b)}(x)}
 = \frac{C/(\mu^{(b)}y)}{1-O(y^{1-s})} + O({y^{-2}})+O(y^{-s})\\
 &= \frac{C}{\mu^{(b)}y} + O(y^{-2})+O(y^{-s})
 < \frac{C_1}{\mu^{(0)} y}
\end{align*}
uniformly in $b\in I$ for all $y$ large enough. 
Therefore, since $e^x-1 \ge x$ for all $x>0$, $\lambda_y^{(b)}(A) < e^{h_y}$ 
uniformly in $A_y\le A \le 1$ and $b\in I$, if $y$ is sufficiently large.
\end{proof}


\begin{lemma}
\label{L4}
 Assume that \eqref{moment_ex_Z} holds for some $I$ and $s>1$. 
 Put $h_y=C_1/(\mu^{(0)} y)$ with a constant $C_1>C\mu^{(0)}/\inf_{b\in I} \mu^{(b)}$. 
 Then, there exists some $b_2>0$ such that for $y$ large enough, $Af_y^{(b)}(z)-1$ 
has no other zeros in the disc $|z|< e^{h_y}$  apart from $\lambda_y^{(b)}(A)$ and this holds
 uniform in $A_y\le A\le 1$ and $0\le b\le b_2$.
\end{lemma}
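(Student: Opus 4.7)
The plan is to show that the only zero of $p(z):=Af_y^{(b)}(z)-1$ in the closed disc $\{|z|\le e^{h_y}\}$ is the real positive root $\lambda:=\lambda_y^{(b)}(A)$, which lies in $[1,e^{h_y})$ by Lemma~\ref{L3}. Write $z=re^{i\theta}$. On the closed disc $\{|z|\le\lambda\}$ the triangle inequality and non-negativity of the coefficients $f_k^{(b)}$ give
\[
|Af_y^{(b)}(z)|\le Af_y^{(b)}(r)\le Af_y^{(b)}(\lambda)=1,
\]
with equality in the second bound only at $r=\lambda$ and, by aperiodicity, equality in the first only at $z$ positive real; hence the only zero in this region is $\lambda$ itself. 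It therefore remains to exclude zeros in the thin annulus $\lambda<r\le e^{h_y}$, of width $O(1/y)$ by Lemma~\ref{L3}, which I would split into three angular regimes.

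For $|\theta|\ge\delta$ with $\delta>0$ small but fixed, I would use aperiodicity at the accumulation point $b_0$: the characteristic-function bound $|\E e^{i\theta Z^{(b_0)}}|<1$ on the compact set $\delta\le|\theta|\le\pi$, combined with \eqref{assumption_weak_conv} and the uniform moment bound \eqref{moment_ex_Z}, yields a constant $c(\delta)<1$ such that $|f_y^{(b)}(re^{i\theta})|\le c(\delta)\,f_y^{(b)}(r)$ uniformly for $b$ in a small neighborhood of $b_0$ and $r\in[1,e^{h_y}]$. Using Lemma~\ref{L2} to estimate $Af_y^{(b)}(e^{h_y})=1+O(1/y)$ then gives $|Af_y^{(b)}(z)|\le c(\delta)(1+O(1/y))<1$ for $y$ large, ruling out zeros.

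For the complementary small-angle range $|\theta|<\delta$, I would look at the imaginary part of $p$. When $0<|\theta|\le\pi/y$,
\[
\Im Af_y^{(b)}(re^{i\theta})=A\sum_{k=1}^y f_k^{(b)}r^k\sin(k\theta)
\]
has the strict sign of $\theta$, since each summand is non-negative (because $k\theta\in[0,\pi]$) and at least one is strictly positive (since $\E Z^{(b)}=b>0$ and by aperiodicity some $f_k^{(b)}$ with $1\le k<y$ is positive); hence $p(z)\ne 0$. For the intermediate range $\pi/y<|\theta|<\delta$, I would exploit the Fej\'er-type identity
\[
f_y^{(b)}(r)^2-|f_y^{(b)}(re^{i\theta})|^2=2\sum_{j,k}f_j^{(b)}f_k^{(b)}r^{j+k}\sin^2\bigl((j-k)\theta/2\bigr)
\]
and compare with $A^2 f_y^{(b)}(r)^2-1=O(1/y)$ on the annulus, reducing the claim $|Af_y^{(b)}(z)|<1$ to a lower bound of order $\gg 1/y$ on the right-hand sum.

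The main obstacle will be establishing this lower bound under only the assumption $s>1$. Without a uniform second moment one has to split the sum at $|j-k|\sim 1/\theta$, bounding the near-diagonal contribution via the truncated second-moment estimate $\E[Z^2;\,Z\le 1/\theta]\le \theta^{s-2}\E Z^s$ and the far contribution via the tail of $|Z_1-Z_2|$; after balancing this yields a gain of order $\theta^s$, which beats $1/y$ whenever $\theta>\pi/y$ and $s>1$. The extra subtlety is uniformity in $b$ near $b_0$ — the implied constants depend on moments of $Z^{(b)}$ that are only controlled through \eqref{assumption_weak_conv} and \eqref{moment_ex_Z} — and should follow by the standard combination of tightness and continuity of characteristic functions.
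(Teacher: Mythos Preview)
Your handling of the disc $|z|\le\lambda$ and of the sector $|\theta|\ge\delta$ is correct and essentially matches the paper. The gap is in the intermediate range $\pi/y<|\theta|<\delta$ on the annulus $\lambda<r\le e^{h_y}$.

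You want a lower bound on the Fej\'er sum $f_y^{(b)}(r)^2-|f_y^{(b)}(re^{i\theta})|^2$ that beats $A^2f_y^{(b)}(r)^2-1\asymp r-\lambda$, which can be as large as $\asymp 1/y$ at $r=e^{h_y}$. But the estimates you sketch---the truncated second moment and the tail of $|Z_1-Z_2|$---are \emph{upper} bounds: the moment condition \eqref{moment_ex_Z} controls tails from above only and gives no lower bound on $\P(|Z_1-Z_2|>1/\theta)$. The guaranteed lower bound on the Fej\'er sum, coming from mass concentrated at a fixed finite level, is only $c\theta^2$, not $\theta^s$. And even a hypothetical $\theta^s$ gain would not suffice: for $s>1$ one has $(\pi/y)^s=\pi^s y^{-s}\ll 1/y$, so the claim ``$\theta^s$ beats $1/y$ whenever $\theta>\pi/y$'' is false precisely at the endpoint where you need it.

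The paper avoids this by invoking Lemma~\ref{L2} on the whole small-angle region $U_y(\delta)$: writing
\[
Af_y^{(b)}(z)-1=A\int_\lambda^z\mu_y^{(b)}(w)\,dw
\]
and using $|\mu_y^{(b)}(w)-\mu^{(b)}|<\mu^{(b)}/2$ uniformly on $U_y(\delta)$ gives $|Af_y^{(b)}(z)-1|\ge\tfrac12A\mu^{(b)}|z-\lambda|>0$ for $z\ne\lambda$, with no further case splitting. An alternative in the spirit of your argument is simply to push the imaginary-part estimate all the way to $|\theta|<\delta$: the terms with $k\theta\le\pi/2$ contribute at least $c\,\mu^{(b)}\theta$ (using $\sin x\ge 2x/\pi$ together with uniform integrability of $Z^{(b)}$ from \eqref{moment_ex_Z}), while the terms with $k>\pi/(2\theta)$ are bounded in absolute value by $e^{h_y y}\,\P(Z^{(b)}>\pi/(2\theta))=O(\theta^s)$. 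Hence $\Im Af_y^{(b)}(re^{i\theta})\ge c\,\mu^{(b)}\theta-C\theta^s>0$ for all $0<\theta<\delta$ uniformly, and both the split at $\pi/y$ and the Fej\'er step become unnecessary.
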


\begin{proof}
First of all, for all $|z|\le e^{h_y}$,
$$
 |\mu_y^{(b)}(z)| 
 \le \int_0^y x |z|^{x-1} dF^{(b)}(x)
 \le e^{h_y y} \mu^{(b)}. 
$$
Using in addition $h_y y=O(1)$ and \eqref{moment_ex_Z}, we conclude
\begin{equation}
\label{mu_a_finite}
 \sup_{y, b \le b_1, |z|\le e^{h_y}} |\mu_y^{(b)}(z)| <\infty.
\end{equation}
Therefore,
\begin{align}
 &\lim_{y\to \infty} \sup_{b\le b_1} \sup_{\stackrel{1\le r \le e^{h_y}}{0\le \varphi\le 2\pi}} 
  \left|f_y^{(b)}(re^{i\varphi})-f_y^{(b)}(e^{i\varphi})\right| \nonumber\\
 &=  \lim_{y\to \infty} \sup_{b\le b_1} \sup_{\stackrel{1\le r \le e^{h_y}}{0\le \varphi\le 2\pi}}
  \big|\mu_y^{(b)}(e^{i\varphi})\big|\left|re^{i\varphi}-e^{i\varphi}\right|  
 =0. \label{eq_0_12}
\end{align}
On the other hand, 
\begin{align}
 \lim_{y\to \infty} \sup_{b\le b_1} \sup_{0\le \varphi\le 2\pi} \big|f_y^{(b)}(e^{i\varphi})-f_\infty^{(b)}(e^{i\varphi})\big|
 &\le  \lim_{y\to \infty} \sup_{b\le b_1} \sup_{0\le \varphi\le 2\pi} \int_y^\infty \big|e^{i\varphi x}\big| dF^{(b)}(x)\nonumber\\ 
 &=\lim_{y\to \infty} \sup_{b\le b_1} \olF^{(b)}(y) = 0. \label{eq_0_13}
\end{align}
As $b\to 0$, $F^{(b)}(\cdot) \to F^{(0)}(\cdot)$ in the sense of Definition 3 from chapter VIII.1 in Feller \cite{F71} 
and $F^{(0)}$ is not defective because of \eqref{moment_ex_Z}. 
Obviously, $u_\varphi(\cdot)=e^{i\varphi\cdot}$ is equicontinuous with $|u_\varphi| = 1<\infty$. 
Hence, by a corollary in chapter VIII.1 in Feller \cite{F71}, 
\beq
\label{eq_0_13_1}
 \int_0^\infty e^{i\varphi x} dF^{(b)}(x) \to  \int_0^\infty e^{i\varphi x} dF^{(0)}(x) 
\eeq
uniformly in $0\le \varphi\le \pi$ as $b\to 0$. 

Now, let us first consider values of $z$ in the circle $|z|<e^{h_y}$ that are not in the vicinity of  $\lambda_y^{(b)}(A)$. 
Due to Lemma \ref{L3}, these values can be characterized as those values that satisfy $|z|<e^{h_y}$ and $\delta\le |\text{arg} z|\le \pi$, $\delta>0$. It is
$$
 \sup_{\delta \le \varphi \le \pi}\big|f_\infty^{(0)}(e^{i\varphi})\big| 
 = \sup_{\delta \le \varphi \le \pi}\left|\int_0^\infty e^{i\varphi x} dF^{(0)}(x)\right| 
 < \sup_{\delta \le \varphi \le \pi}\int_0^\infty \big|e^{i\varphi x}\big| dF^{(0)}(x)
 = 1.
$$
Combining the latter inequality with \eqref{eq_0_13_1}, we conclude that there exists some $b_2>0$ (assume without loss of generality $b_2\le b_1$) such that
$$
 \sup_{b\le b_2} \sup_{\delta \le \varphi \le \pi} |f_\infty^{(b)}(e^{i\varphi})| < 1
$$
and since this inequality is strict,
\begin{equation}
\label{eq_0_14}
 m(\delta) 
 := \inf_{b\le b_2} \inf_{A\le 1} \inf_{\delta\le \varphi\le \pi} \big|Af_\infty^{(b)}(e^{i\varphi})-1\big| > 0. 
\end{equation}
By combining \eqref{eq_0_12}, \eqref{eq_0_13} and \eqref{eq_0_14},
we conclude that for large enough $y$ and $A\in \mathfrak{A}_y$, 
\begin{equation}
\label{eq_0_15}
 \inf_{b\le b_2} \inf_{\stackrel{1\le r \le e^{h_y}}{\delta\le \varphi\le 2\pi}} \big|Af_y^{(b)}(re^{i\varphi})-1\big| > \frac{m(\delta)}{2}>0.
\end{equation}
On the basis of \eqref{eq_0_15} we can assert that if $Af_y^{(b)}(z)-1$ has a zero $\tilde{\lambda}_y^{(b)}(A)$ in the disc $|z|\le e^{h_y}$ differing from $\lambda_y^{(b)}(A)$, then $\tilde{\lambda}_y^{(b)}(A)$ will lie outside the region $\{z:1\le |z|\le e^{h_y}, |\arg z| \ge \delta\}$
and this holds uniformly in $b\in [0,b_2]$ and $A \in \mathfrak{A}_y$.\\
Next, consider the region $U_y(\delta)=\{z:1\le |z|\le e^{h_y}, |\arg z| < \delta\}$. 
Observe that Taylor's formula implies
\begin{align*}
 Af_y^{(b)}(z)-1
&= Af_y^{(b)}(z)-Af_y^{(b)}(\lambda_y^{(b)}(A))
\ge A\mu_y^{(b)}(\lambda_y^{(b)}(A))(z-\lambda_y^{(b)}(A)).
\end{align*}
This inequality plus the equicontinuity of $f_y^{(b)}(z)$ imply the existence of a
$\delta_1(b, A)>0$ such that 
$
 |Af_y^{(b)}(z)-1|
$
has no other zeros in the disc $|z-\lambda_y^{(b)}(A)|\le \delta_1(b,A)$ apart from 
$\lambda_y^{(b)}(A)$. Therefore, 
$$
 \tilde{m}(\delta_2):=\inf_{b\le b_2}  \inf_{A \in \mathfrak{A}_y} 
 \inf_{z:|z-\lambda_y{(b)}(A)|\le \delta_2}  |Af_y^{(b)}(z)-1| > 0.
$$ 
where $\delta_2 = \inf_{b\le b_2} \inf_{A\in \mathfrak{A}_y} \delta_1(b,A)>0$.
Observe that ${\lambda}_y^{(b)}(A)\ge 1$ for $A\le 1$ and ${\lambda}_y^{(b)}(A) < e^{h_y}$ by Lemma \ref{L3}. 
Hence, for $\delta$ small enough, say $\delta\le \delta_3$, 
the region 
$$
 \bigcup_{b\le b_2} \bigcup_{A\in \mathfrak{A}_y}\{z:|z-\lambda_y^{(b)}(A)|\le\delta_1\}
$$ 
covers $U_y(\delta)$
and that means $\tilde{\lambda}_y^{(b)}(A)$ cannot lie in the region
$\{z:1-\varepsilon_0\le |z|\le e^{h_y}, |\arg z|<\delta_3\}$. 
Setting $\delta=\delta_3$ in \eqref{eq_0_15} we conclude that 
$\tilde{\lambda}_y^{(b)}(A)$ cannot lie in the annulus $1\le |z|\le e^{h_y}$.
Since $|\tilde{\lambda}_y^{(b)}(A)|\ge 1$ for all $A\le 1$, we finally obtain that $\tilde{\lambda}_y^{(b)}(A)$ does not lie in the disc $|z|\le e^{h_y}$, so $\lambda_y^{(b)}(A)$ is the only root of the equation $Af_y^{(b)}(A)=1$ in the disc $|z|\le e^{h_y}$ and this holds uniformly in $b\le b_2$ and $A_y \le A \le 1$.
\end{proof}


{\it{Proof of Proposition \ref{P1}.}}
Let $\gamma_{y}$ be a circle of radius $r_{y}=e^{h_{y}}$ with $h_{y}=C_1/(\mu^{(0)} y)$, $C_1>\mu^{(0)}+C\mu^{(0)}/\inf_{b\le b_1}\mu^{(b)}$ and $C$ from \eqref{P1.4}. 
Then, according to Lemma \ref{L3} and Lemma \ref{L4}, there exists some $b_2>0$ such that for all 
$0\le b \le b_2$ and $A\in \mathfrak{A}_y$, the function 
$1-Af_{y}^{(b)}(z)$ is zero in the disc $|z|\le e^{h_{y}}$, if and only if $z=\lambda_{y}^{(b)}(A)$.
Hence, the Residue theorem states that
\begin{equation}
\label{residue}
 \frac{1}{2\pi i} \int_{\gamma_y} \frac{z^{-y-1}}{1-Af_{y}^{(b)}(z)} dz
 = \text{Res}\left(\frac{z^{-y-1}}{1-Af_{y}^{(b)}(z)},\lambda_{y}^{(b)}(A)\right) + \text{Res}\left(\frac{z^{-y-1}}{1-Af_{y}^{(b)}(z)}, 0\right).
\end{equation}
for $0\le b \le b_2$ and $A\in\mathfrak{A}_y$.

In the following denote by $C_n(f(z))$, $n\ge 1$, the coefficient of $z^n$ in the Taylor series of the function $f(z)$. 
An easy calculation shows that
\begin{equation*}
 A^n (f_\infty^{(b)}(z))^n = A^n \sum_{j=1}^\infty \left(F_n^{(b)}(j)-F_n^{(b)}(j-1)\right)z^j
\end{equation*}
and consequently, by changing the order of summation, it is not hard to see that
$$ \sum_{k=1}^\infty A^k  \left(F_k^{(b)}(n)-F_k^{(b)}(n-1)\right)
 =C_n\left(\frac{1}{1-Af_\infty^{(b)}(z)}\right).
$$
On the other hand, when $n\le y$, 
$$
 C_n\left(\frac{1}{1-Af_\infty^{(b)}(z)}\right) 
 = C_n\left(\frac{1}{1-Af_{y}^{(b)}(z)}\right)
$$
and thus, for $n\le y$,
\begin{equation}
\label{coefficient}
 \sum_{k=1}^\infty A^k  \left(F_k^{(b)}(n)-F_k^{(b)}(n-1)\right)
 =C_n\left(\frac{1}{1-Af_{y}^{(b)}(z)}\right).
\end{equation}
Regarding \eqref{coefficient} with $n=y$, one can easily verify
$$
 \text{Res}\left(\frac{z^{-y-1}}{1-Af_{y}^{(b)}(z)}, 0\right)
 = \sum_{k=1}^\infty A^k \left(F_k^{(b)}(y)-F_k^{(b)}(y-1)\right).
$$
The pole of the function $z^{-y-1}/(1-Af_{y}^{(b)}(z))$ in $z=\lambda_{y}^{(b)}(A)$ is of order $1$. 
Therefore, it is not hard to see that
$$
 \text{Res}\left(\frac{z^{-y-1}}{1-Af_{y}^{(b)}(z)}, \lambda_{y}^{(b)}(A)\right)
 =-\frac{\lambda_{y}^{(b)}(A)^{-y-1}}{A\mu_{y}^{(b)}(\lambda_{y}^{(b)}(A))}
$$
and by combining the latter results we obtain
 \begin{align*}
  \nonumber
  \sum_{k=1}^\infty A^k \left(F_k^{(b)}(y)-F_k^{(b)}(y-1)\right) 
  &= \frac{\big(\lambda_{y}^{(b)}(A)\big)^{-y-1}}{A \mu_{y}^{(b)}(\lambda_{y}^{(b)}(A))} 
   + \frac{1}{2\pi i} \int_{\gamma_{y}} \frac{z^{-y-1}}{1-Af_{y}^{(b)}(z)} dz.
 \end{align*}
It remains to show that under the conditions of Proposition \ref{P1}, 
\begin{equation}
 \label{rest_negligible}
 \frac{1}{2\pi i} \int_{\gamma_y} \frac{z^{-y-1}}{1-Af_{y}^{(b)}(z)} dz 
 = o\left(y^{-\min\{1,s-1\}}\ln y\right)
\end{equation}
uniformly in $b\le b_2$ and $A_y\le A\le 1$. 
Let
\begin{align*}
 &\varphi_y^{(b)}(z) = A(f_{y}^{(b)}(z)- f_{y}^{(b)}(r_y))-A\mu_{y}^{(b)}(r_y)(z-r_y),\\
 &\psi_y^{(b)}(z) = 1- A f_{y}^{(b)}(r_y)-A\mu_{y}^{(b)}(r_y)(z-r_y).
\end{align*}
Then, the following identity holds:
\begin{equation}
 \label{eq_4}
 \frac{1}{1-Af_y^{(b)}(z)} - \frac{1}{\psi_y^{(b)}(z)} 
 =\frac{\psi_y^{(b)}(z) - 1 + Af_{y}^{(b)}(z)}{(1-Af_y^{(b)}(z))\psi_y^{(b)}(z)}
 =\frac{\varphi_y^{(b)}(z)}{(1-Af_y^{(b)}(z))\psi_y^{(b)}(z)}.
\end{equation}
Let $\varepsilon>0$, $\gamma_y(\varepsilon)= \gamma_y \cap U_y(\varepsilon)$ and let $\overline{\gamma}_y(\varepsilon)$ be the complement of $\gamma_y(\varepsilon)$ with respect to $\gamma_y$.
By \eqref{eq_4},
\begin{align*}
 \int_{\gamma_y} \frac{z^{-y-1}}{1-Af_y^{(b)}(z)} dz 
 = \int_{\gamma_y} \frac{z^{-y-1}}{\psi_y^{(b)}(z)} dz 
  &+\int_{\gamma_y(\varepsilon)} \frac{z^{-y-1}\varphi_y^{(b)}(z)}{(1-Af_y^{(b)}(z))\psi_y^{(b)}(z)} dz\\
  &+\int_{\overline{\gamma}_y(\varepsilon)} \frac{z^{-y-1}\varphi_y^{(b)}(z)}{(1-Af_y^{(b)}(z))\psi_y^{(b)}(z)} dz.
\end{align*}
Using \eqref{eq_4} once again, the last integral of the latter identity can be rewritten as 
$$
 \int_{\overline{\gamma}_y(\varepsilon)} \frac{z^{-y-1}\varphi_y^{(b)}(z)}{(1-Af_y^{(b)}(z))\psi_y^{(b)}(z)} dz
 = -\int_{\overline{\gamma}_y(\varepsilon)} \frac{z^{-y-1}}{\psi_y^{(b)}(z)} dz 
  +  \int_{\overline{\gamma}_y(\varepsilon)} \frac{z^{-y-1}}{1-Af_y^{(b)}(z)} dz.
$$
Hence,
\begin{equation}
 \int_{\gamma_y} \frac{z^{-y-1}}{1-Af_y^{(b)}(z)} dz
 = I_1^{(b)}(y) + \sum_{j=2}^4 I_j^{(b)}(y, \varepsilon), \label{I_1234}
\end{equation}
where
\begin{align*}
 &I_1^{(b)}(y) = \int_{\gamma_y} \frac{z^{-y-1}}{\psi_y^{(b)}(z)} dz, 
 \quad I_2^{(b)}(y, \varepsilon) = \int_{\gamma_y(\varepsilon)} \frac{z^{-y-1}\varphi_y^{(b)}(z)}{(1-Af_y^{(b)}(z))\psi_y^{(b)}(z)} dz,\\
 &I_3^{(b)}(y, \varepsilon) =  -\int_{\overline{\gamma}_y(\varepsilon)} \frac{z^{-y-1}}{\psi_y^{(b)}(z)} dz,
 \quad I_4^{(b)}(y, \varepsilon) =  \int_{\overline{\gamma}_y(\varepsilon)} \frac{z^{-y-1}}{1-Af_y^{(b)}(z)} dz.
\end{align*}
To calculate $I_1^{(b)}$ let us examine integrals of the form
\begin{equation}
 \label{zero_int} 
 \int_{|z|=c^2} \frac{z^{-n}}{d z+h}dz,
\end{equation}
where $n>0$, $d,h\in \mathbb{C}$ and $|h|<c^2 |d|$. 
For $|h|<c^2|d|$, the function $z^{-n}/(dz+h)$ has exactly two singularities in the disc $|z|\le c^2$,
one in $0$ and the other in $-h/d$. 
Consequently the Residue theorem states that
\begin{equation*}
 \int_{|z|=c^2} \frac{z^{-n}}{d z+h}dz
 = \text{Res}\left(\frac{z^{-n}}{d z+h},0\right) + Res\left(\frac{z^{-n}}{d z+h},-\frac{h}{d}\right).
\end{equation*}
The pole in $z=0$ has order $n$, hence
$$
 \text{Res}\left(\frac{z^{-n}}{d z+h},0\right)
 =(-1)^{n-1} d^{n-1} h^{-n}
$$
and the pole in $z=-h/d$ is of order 1, thus
$$
 \text{Res}\left(\frac{z^{-n}}{d z+h},-\frac{h}{d}\right) = (-1)^n d^{n-1} h^{-n}.
$$
Therefore, 
\begin{equation}
 \label{int_equals_zero}
 \int_{|z|=c^2} \frac{z^{-n}}{d z+h}dz
 = [(-1)^{n-1} + (-1)^n] d^{n-1} h^{-n} 
 = 0.
\end{equation}
By the equicontinuity of $\mu_y^{(b)}(\cdot)$, the result from \eqref{mu_a_finite}, Lemma \ref{L2} and Lemma \ref{L3}, as $y\to \infty$,
\begin{align}
 f_y^{(b)}(r_y)-f_y^{(b)}(\lambda_y^{(b)}(A))
 &= (r_y- \lambda_y^{(b)}(A))\mu^{(b)}(\lambda_y^{(b)}(A)) + o(r_y- \lambda_y^{(b)}(A)) \nonumber\\
 &= (r_y- \lambda_y^{(b)}(A))\mu^{(b)} + o(r_y- \lambda_y^{(b)}(A)) \label{absolute_continuous}
\end{align}
uniformly in $b\le b_2$ and $A\in \mathfrak{A}_y$.  
By virtue of Lemma \ref{L3} and the definition of $C_1$, 
$
 |\lambda_y^{(b)}(A)| \le e^{h_y-1/y}
$ 
and consequently
\begin{align*}
 r_y-\lambda_y^{(b)}(A) 
&\ge e^{h_y}(1-e^{-1/y})\\
&=(1+h_y+o(y^{-1}))(y^{-1}+o(y^{-1})) 
= y^{-1} + o(y^{-1})
\end{align*}
uniformly in $b\le b_2$ and $A\in \mathfrak{A}_y$.
By plugging this results into \eqref{absolute_continuous},
\begin{equation}
 \label{eq_6}
  1-A f_{y}^{(b)}(r_y) \le - \frac{A \mu^{(b)}}{y} +o\left(\frac{1}{y}\right) <0
\end{equation}
for $y$ large enough. Now put  
$
 h=1-Af_{y}^{(b)}(r_y)+A\mu_{y}^{(b)} (r_y)r_y
$
and 
$
 d=-A\mu_{y}^{(b)} (r_y). 
$
Then, since $A\mu_{y}^{(b)} (r_y)r_y\ge A\mu_{y}^{(b)} (1)\neq o(1)$, we obtain by virtue of \eqref{eq_6},
$$
 |h|\le A\mu_{y}^{(b)} (r_y)r_y = |d|r_y
$$
and consequently by \eqref{int_equals_zero},
\begin{equation}
 \label{I1_equals_zero}
 I_1^{(b)}(y) = \int_{\gamma_y} \frac{z^{-y-1}}{\psi_y^{(b)}(z)} dz = 0.
\end{equation}
Let us now consider $I_2^{(b)}$.
Clearly,
$$
 I_2^{(b)}(y, \varepsilon) = i r_y^{-y}\int_{\varepsilon}^{2\pi-\varepsilon} 
 \frac{\varphi_y^{(b)}(r_y e^{it})}{(1-Af_y^{(b)}(r_y e^{it}))\psi_y^{(b)}(r_y e^{it})} e^{-ity} dt.
$$
Taiblesons \cite{T67} estimate for Fourrier coefficients states that for any function $f$ 
with bounded variation on $[0, 2\pi]$ and
$
 f(x) \sim \sum_{n=-\infty}^{\infty} c_n e^{inx}
$
as $x\to \infty$, it is 
$$
 |c_n| \le \frac{2\pi \text{var}(f)}{n},
$$
where var denotes the variation of $f$,  defining this to be the sum of the variations of the real and the imaginary parts.
Hence, 
\begin{equation}
\label{varbound}
 I_2^{(b)}(y,\varepsilon)
 = O\left(\frac{1}{y} \underset{z \in \gamma_y(\varepsilon)}{\var} \frac{\varphi_y^{(b)}(z)}{(1-Af_y^{(b)}(z))\psi_y^{(b)}(z)}\right).  
\end{equation}
The variation of $\omega_y^{(b)}(z):=\varphi_y^{(b)}(z)/((1-Af_y^{(b)}(z))\psi_y(z))$ on $\gamma_y(\varepsilon)$ can be rewritten as follows:
\begin{align*}
 \underset{z \in \gamma_y(\varepsilon)}{\var}  \omega_y^{(b)}(z)
 &=  \underset{z \in \gamma_y(\varepsilon)}{\var} \Re (\omega_y^{(b)}(z))
  +  \underset{z \in \gamma_y(\varepsilon)}{\var} \Im (\omega_y^{(b)}(z))\\
 &= \int_{\gamma_y(\varepsilon)} \left(\left|\frac{d}{dl} \Re(\omega_y^{(b)}(z))\right| 
  + \left|\frac{d}{dl} \Im(\omega_y^{(b)}(z))\right|\right)dl,
\end{align*}
where $dl$ is the differential of the arc along $\gamma_y(\varepsilon)$. 
Due to the binomial formula, 
\begin{align*}
 \left(\left|\frac{d}{dl} \Re(\omega_y^{(b)}(z))\right| 
  + \left|\frac{d}{dl} \Im(\omega_y^{(b)}(z))\right|\right)^2
&\le 2\left(\left|\frac{d}{dl} \Re(\omega_y^{(b)}(z))\right|^2 
  + \left|\frac{d}{dl} \Im(\omega_y^{(b)}(z))\right|^2\right)\\
 &= 2 \left|\frac{d}{dz} \omega_y^{(b)}(z)\right|^2
\end{align*}
and thus, 
\begin{align}
 &\underset{z \in \gamma_y(\varepsilon)}{\var}  \frac{\varphi_y^{(b)}(z)}{(1-Af_y^{(b)}(z))\psi_y^{(b)}(z)}
 \le \sqrt{2} \int_{\gamma_y(\varepsilon)} \left| \frac{d}{dz}\frac{\varphi_y^{(b)}(z)}{(1-Af_y^{(b)}(z))
  \psi_y^{(b)}(z)}\right| dz\nonumber\\
 &\le \sqrt{2} \left(\int_{\gamma_y(\varepsilon)} \left| \frac{{\psi_y^{(b)}}'(z)\varphi_y^{(b)}(z)}
  {(1-Af_y^{(b)}(z))(\psi_y^{(b)}(z))^2}\right| dz
  +\int_{\gamma_y(\varepsilon)}\left| \frac{A\mu_y^{(b)}(z)\varphi_y^{(b)}(z)}
   {(1-Af_y^{(b)}(z))^2\psi_y^{(b)}(z)}\right| dz\right.\nonumber\\
 &\hspace{6cm}+\left.\int_{\gamma_y(\varepsilon)}\left| \frac{{\varphi_y^{(b)}}'(z)}{(1-Af_y^{(b)}(z)) \psi_y^{(b)}(z)}\right| dz\right)\nonumber\\
 &= \sqrt{2} (I_{21}^{(b)}+I_{22}^{(b)}+I_{23}^{(b)}).  \label{I2_123}
\end{align}
Let us bound the terms appearing in the integrands of the integrals from the latter inequality.
Using the definition of the complex absolute value, an easy calculation shows that
\begin{align*}
 |Af_y^{(b)}(z)-1|^2 
 &= A^2|f_y^{(b)}(z)-f_y^{(b)}(r_y)|^2 + |Af_y^{(b)}(r_y)-1|^2 \\
  &\hspace{2.5cm}- 2A(Af_y^{(b)}(r_y)-1)\Re(f_y^{(b)}(r_y)-f_y^{(b)}(z)).
\end{align*}
By the equicontinuity and Lemma \ref{L2}, as $y\to \infty$,
\begin{equation}
\label{absolute_cont_1}
 |f_y^{(b)}(r_y)-f_y^{(b)}(z)| = |r_y-z| \mu^{(b)}(z) + o(r_y-z) \ge (1-\delta)\mu^{(b)} |z-r_y|
\end{equation}
and 
\begin{equation}
\label{absolute_cont_2}
 |f_y^{(b)}(r_y)-f_y^{(b)}(z)| = |r_y-z| \mu^{(b)}(z) + o(r_y-z) \le (1+\delta)\mu^{(b)} |z-r_y|
\end{equation}
uniformly in $b\le b_2$ and $z\in U_y^{(b)}(\delta)$, if $\delta$ is small enough. 
Further, for all $z\in U_y(\delta)$ with $\delta$ sufficiently small,
$$
 \Re(r_y-z)= \sin(\arg z)|z-r_y| \le \delta |z-r_y|.
$$
By the virtue of \eqref{eq_6}, \eqref{absolute_cont_1} and \eqref{absolute_cont_2}, 
\begin{align*}
 |Af_y^{(b)}(z)-1|^2 
 \ge |1-Af_y^{(b)}(r_y)|^2 &+ (1-\delta)(\mu^{(b)})^2 A^2 |z-r_y|^2\\
  &- 2\delta (1+\delta) \mu^{(b)} A (Af_y^{(b)}(r_y)-1)|z-r_y|
\end{align*}
and by the binomial formula for $\delta$ sufficiently small,
$$
 2\mu^{(b)} A (Af_y^{(b)}(r_y)-1)|z-r_y|
 \le  (\mu^{(b)})^2 A^2 |z-r_y|^2 + |1-Af_y^{(b)}(r_y)|^2.
$$
Therefore, again by the binomial formula,
\begin{align*}
 |Af_y^{(b)}(z)-1|^2 
 &\ge(1-\delta-\delta(1+\delta))\left[|1-Af_y^{(b)}(r_y)|^2+(\mu^{(b)})^2 A^2 |z-r_y|^2\right]\\
 &\ge \frac{1-\delta-\delta(1+\delta)}{2}\left[|1-Af_y^{(b)}(r_y)|+A\mu^{(b)} |z-r_y|\right]^2. 
\end{align*}
Since $\delta$ can be chosen arbitrary small, one can especially choose $\delta$ so small that $1-\delta-\delta(1+\delta) \ge 1/2$. 
Thus,
\begin{equation}
 \label{eq1_1}
 |Af_y^{(b)}(z)-1| \ge \frac{|Af_y^{(b)}(r_y)-1|}{2} +\frac{A\mu^{(b)}  |z-r_y|}{2}
\end{equation}
uniformly in $b\le b_2$ and $z\in U_y^{(b)}(\delta)$. We proceed analogously to bound $|\psi_y^{(b)}(z)|$ for $z\in U_y(\delta)$ from below. 
It is $\Re(r_y-)z\le |z-r_y|$ and by virtue of Lemma \ref{L2}, $\mu_{y}^{(b)}(r_y) \in [(1-\widehat{\delta}_1)\mu^{(b)},(1+\widehat{\delta}_1)\mu^{(b)}]$ for $y$ large enough. Consequently, one can easily see that for $\widehat{\delta}_1$ small enough,
\begin{align}
 |\psi_y^{(b)}(z)|^2 
 &= |1-f_{y}^{(b)}(r_y)|^2+A^2\left(\mu_{y}^{(b)}(r_y)\right)^2 |z-r_y|^2\nonumber\\ 
 &\hspace{3cm}- 2A(f_{y}^{(b)}(r_y)-1) \mu_{y}^{(b)}(r_y)\Re(r_y-z)\nonumber\\
 &\ge \frac{1-\widehat{\delta}_2}{2}\left[|1-f_{y}^{(b)}(r_y)|+A\mu^{(b)} |z-r_y|\right]^2\label{eq1_1_2}
\end{align}
for all $\widehat{\delta}_2\le 1/2$.
Hence, 
\begin{equation}
 \label{eq1_2}
 |\psi_y^{(b)}(z)| \ge \frac{|1-Af_{y}^{(b)}(r_y)|}{4} +\frac{A \mu^{(b)} |z-r_y|}{4}.
\end{equation}
On the other hand, 
one can easily see that for every $z$ on $\gamma_y(\varepsilon)$ 
with $\varepsilon$ sufficiently small,  
\begin{align}
 |z-r_y| \ge |e^{i \arg z}-1| 
 &= \sqrt{\sin^2(\arg z) + (1-\cos(\arg z))^2} \nonumber\\
 &=\sqrt{2-2\cos (\arg z)} \ge \frac{|\arg z|}{2}, \label{abs}
\end{align}
where we used $\cos \varphi \le 1- \varphi^2/8$ in the last inequality. 
Combining inequalities \eqref{eq_6} and \eqref{abs} with \eqref{eq1_1}, we obtain
\begin{equation}
 \label{f_y_z}
 |1-Af_y^{(b)}(z)| \ge \frac{A\mu^{(b)}}{4}\left(\frac{1}{y}+|\arg z|\right).
\end{equation}
for $b\le b_2$ and $z\in U_y^{(b)}(\delta)$. 
The inequalities \eqref{eq_6}, \eqref{abs} and \eqref{eq1_2} provide
\begin{equation}
 \label{psi_y_z}
 |\psi_y^{(b)}(z)| \ge \frac{A\mu^{(b)}}{8}\left(\frac{1}{y}+|\arg z|\right)
\end{equation}
and, moreover, an easy calculation shows
\begin{equation}
 \label{psi_dash}
 |{\psi_y^{(b)}}'(z)|=A \mu_{y}^{(b)}(r_y)\le e^ {h_y y}A\mu^{(b)}.
\end{equation}
For $z\in U_y(\delta)$, 
\begin{equation*}
 |{f_{y1}^{(b)}}''(z)| 
 \le \left\{
     \begin{array}{ll}
       e^{h_y y} \E (Z^{(b)})^2 & : s \ge 2\\
       e^{h_y y}  y^{2-s} \E (Z^{(b)})^{s} & : 1<s<2
     \end{array}
   \right. 
\end{equation*}
and consequently, 
$$
 \frac{\varphi_y^{(b)}(z)}{|z-r_y|^2 y^{\max\{0,2-s\}}} 
 \sim \frac{{\varphi_y^{(b)}}'(z)}{2|z-r_y| y^{\max\{0,2-s\}}}
 \sim \frac{A{f_{y}^{(b)}}''(z)}{2y^{\max\{0,2-s\}}}=O(1)
$$
as $y\to \infty$. By virtue of \eqref{abs},
$$
 |e^{i\arg z}| = \sqrt{2-2\cos(\arg z)} \le \arg z,
$$
if $\arg z$ is sufficiently small.
Hence, if $\arg z$ is is sufficiently small,
\begin{equation}
 \label{phi}
 \varphi_y^{(b)}(z) = O(y^{\max\{0,2-s\}} |z-r_y|^2) = O(y^{\max\{0,2-s\}}\arg^2(z))
\end{equation}
and
\begin{equation}
\label{phi_dash}
 {\varphi_y^{(b)}}'(z) = O(y^{\max\{0,2-s\}} |z-r_y|) = O(y^{\max\{0,2-s\}}|\arg(z)|)
\end{equation}
uniformly in $b\le b_2$ and $A\in \mathfrak{A}_y$.
Considering \eqref{f_y_z}, \eqref{psi_y_z}, \eqref{psi_dash}, \eqref{phi} and $h_y y=O(1)$ provides
\begin{align*}
 |I_{21}^{(b)}|
 &\le r_y\int_{-\varepsilon}^\varepsilon \frac{\big|{\psi_y^{(b)}}'(r_y e^{it})\big||\varphi_y^{(b)}(r_y e^{it})|}
  {|f_y^{(b)}(r_y e^{it})-1||\psi_y^{(b)}(r_y e^{it})|^2} dt
= O\left(y^{\max\{0,2-s\}}\int_0^\varepsilon \frac{t^2}{(y^{-1}+t)^3} dt \right)
\end{align*}
uniformly in $b\le b_2$ and $A\in \mathfrak{A}_y$.  
Moreover,
\begin{align}
 \label{eq1_4}
 \int_0^\varepsilon \frac{t^2}{(y^{-1}+t)^3} dt
 &=\int_{1/y}^{\varepsilon+1/y} \frac{(w-y^{-1})^2}{w^3} dw\nonumber\\ 
 &\sim \ln(\varepsilon+y^{-1})-\ln(y^{-1}) 
 =\ln(1+\varepsilon y)
 \sim \ln(y)
\end{align}
and therefore, uniformly in $b\le b_2$ and $A\in \mathfrak{A}_y$,
\begin{equation}
 \label{I_21}
 |I_{21}^{(b)}| = O(y^{\max\{0,2-s\}}\ln y).
\end{equation}
In analogy, by additionally taking into account that $\mu_y^{(b)}(z)\le 2\mu^{(b)}$ 
due to Lemma \ref{L2} for $y$ large enough, one can easily see that
\begin{equation}
 \label{I_22}
 I_{22}^{(b)}
 = O(y^{\max\{0,2-s\}}\ln y)
\end{equation}
and furthermore, by regarding \eqref{phi_dash},
\begin{equation}
 \label{I_23}
 I_{23}^{(b)}
 = O\left(y^{\max\{0,2-s\}}\int_0^\varepsilon \frac{t}{(y^{-1}+t)^2} dt\right)
 = O(y^{\max\{0,2-s\}}\ln y).
\end{equation}
Finally, plugging \eqref{I_21}, \eqref{I_22} and \eqref{I_23} into \eqref{I2_123} we attain
$$
  \underset{z \in \gamma_y(\varepsilon)}{\var} \frac{\varphi_y^{(b)}(z)}{(1-Af_y^{(b)}(z))\psi_y^{(b)}(z)}
 = O(y^{\max\{0,2-s\}}\ln y)
$$
and hence by \eqref{varbound},
\begin{equation}
 \label{I2_negligible}
 |I_2^{(b)}(y,\varepsilon)| = o\big(y^{\max\{-1,-(s-1)\}}\ln y\big)
\end{equation}
uniformly in $b\le b_2$ and the admissible values of $A$. 
Next, we draw our attention to the integral $I_3^{(b)}$.
\begin{equation}
 I_3^{(b)}(y,\varepsilon)
 = -i r_y^{-y} \int_{\varepsilon\le |t| \le \pi} \frac{e^{-iyt}}{\psi_y^{(b)}(r_y e^{it})} dt. \label{eq1_6}
\end{equation}
To bound this integral we use Taibleson's estimate for Fourier coefficients again:
\begin{equation}
\label{eq1_7}
 \int_{\varepsilon\le |t| \le \pi} \frac{e^{-iyt}}{\psi_y^{(b)}(r_y e^{it})} dt
 =O\left(\frac{1}{y} \underset{z\in \overline{\gamma}_y(\varepsilon)}{\var} \frac{1}{\psi_y^{(b)}(z)} \right) 
\end{equation}
In analogy to \eqref{I2_123}, one can show that
\begin{align*}
 \underset{z\in \overline{\gamma}_y(\varepsilon)}{\var} \frac{1}{\psi_y^{(b)}(z)} 
 \le \sqrt{2} \int_{\overline{\gamma}_y(\varepsilon)} \left|\frac{d}{dz}\frac{1}{\psi_y^{(b)}(z)}\right| dz
 \le \sqrt{2} \int_{\overline{\gamma}_y(\varepsilon)} \frac{|{\psi_y^{(b)}}'(z)|}{|\psi_y^{(b)}(z)|^2} dz.
\end{align*}
By \eqref{eq1_2},
$$
 |\psi_y^{(b)}(z)|^2
 \ge \frac{A^2(\mu^{(b)})^2}{16} |z-r_y|^2
 \ge \frac{A^2 (\mu^{(b)})^2}{16} \varepsilon^2,
$$
where we used that $|z-r_y|>\varepsilon$ for all $z\in \overline{\gamma}_y(\varepsilon)$.
Therefore, by \eqref{psi_dash},
$$
 \underset{z\in \overline{\gamma}_y(\varepsilon)}{\var} \frac{1}{\psi_y^{(b)}(z)} 
 = O(1)
$$
and consequently by combining this result with \eqref{eq1_6}, \eqref{eq1_7} and $h_y y =O(1)$,
\begin{equation}
\label{I3_negligible}
 |I_3^{(b)}(y,\varepsilon)| 
 = O\left(\frac{1}{y}\right)
\end{equation}
uniform in $b\le b_2$ and $A\in \mathfrak{A}_y$.
It remains to consider $I_4^{(b)}$.
\begin{equation}
 |I_4^{(b)}(y,\varepsilon)|
 = \left|i r_y^{-y} \int_{\varepsilon\le |t| \le \pi} \frac{e^{-iyt}}{1-Af_y^{(b)}(r_y e^{it})} dt\right|
 = O\left(\frac{1}{y} \underset{z\in \overline{\gamma}_y(\varepsilon)}{\var} \frac{1}{1-Af_y^{(b)}(z)} \right). \label{eq1_8}
\end{equation}
Further, by \eqref{mu_a_finite} and \eqref{eq_0_15}, 
\begin{align*}
 \underset{z\in \overline{\gamma}_y(\varepsilon)}{\var} \frac{1}{1-Af_y^{(b)}(z)} 
 &\le \sqrt{2} \int_{\overline{\gamma}_y(\varepsilon)} \left|\frac{d}{dz}\frac{1}{1-Af_y^{(b)}(z)}\right| dz\\
 &= \sqrt{2} \int_{\overline{\gamma}_y(\varepsilon)} \frac{|\mu_y^{(b)}(z)|}{|1-Af_y^{(b)}(z)|^2} dz
 =O(1)
\end{align*}
and consequently
\begin{equation}
\label{I4_negligible}
 I_4^{(b)}(y,\varepsilon) 
 =O\left(\frac{1}{y}\right).
\end{equation}
Finally, by plugging the results attained in \eqref{I1_equals_zero}, \eqref{I2_negligible}, \eqref{I3_negligible} and \eqref{I4_negligible} into \eqref{I_1234}, we get
\begin{align}
 \int_{\gamma_y} \frac{z^{-y-1}}{1-Af_y^{(b)}(z)} dz
 &= o(y^{\max\{-1,-(s-1)\}}\ln y) + o(y^{-(s-1)}) + O(y^{-1})\nonumber\\ 
 &= o(y^{-\min\{1,s-1\}}\ln y) \label{eq1_9}
\end{align}
uniformly in $0\le b\le b_2$ and $A_y\le A \le 1$.


\section{Proof of the local limit theorem}

Put $\tau_{+,0}^{(a)}=0$ and define recursively for $i\ge 1$ the $i$-th strict ascending ladder epoch of the random walk $S^{(a)}$ and its corresponding ladder height by 
$$
 \tau_{+,i}^{(a)}:=\min\{k\ge \tau_{+,i-1}^{(a)}: S^{(a)}_k > S^{(a)}_{\tau_{+,i-1}}\} \quad \text{and}
  \quad \chi^{(a)}_i=S_{\tau_{+,i}^{(a)}}^{(a)}-S_{\tau_{+,i-1}^{(a)}}^{(a)}.
$$ 
In the case $i=1$ we write $\tau_+^{(a)}$ and $\chi^{(a)}$ instead of $\tau_{+,1}^{(a)}$ and $\chi^{(a)}_1$ respectively and, if additionally $a=0$,
we write $\tau_+$ and $\chi$ instead of $\tau_+^{(0)}$ and $\chi^{(0)}$ respectively.
Define random variables $Z_i^{(a)}$ as $iid$ copies of a random variable $Z^{(a)}$ with
$$
 \P(Z^{(a)}\in \cdot) = \P(\chi^{(a)}_1 \in \cdot | \tau_+^{(a)}<\infty).
$$

Denote by $\theta:= \min\{k\ge 0:S_k^{(a)}=M^{(a)}\}$ the first time the random walk reaches its maximum. 
Then,
$$
 \P(M^{(a)}=y\Delta) = \sum_{n=1}^\infty \P(M^{(a)}=y\Delta,\theta=n).
$$
We further define $M_n^{(a)}:=\max_{k\le n} S^{(a)}_k$ and $\theta_n:=\min\{k\le n:S_k^{(a)}=M^{(a)}_n\}$. 
By the Markov property,
$$
 \P(M^{(a)}=y\Delta,\theta=n)
 = \P(S_n^{(a)}=y\Delta,\theta_n=n)\P(\tau_a^+=\infty).
$$
Hence the following representation holds for the maximum:
\beq
\label{repr_for_max}
 \P(M^{(a)}=y\Delta) 
 = \P(\tau_+^{(a)}=\infty)\sum_{n=1}^\infty \P(S_n^{(a)}=y\Delta,\theta_n=n).
\eeq
Clearly,
 \begin{align}
  &\P(S_n^{(a)}=y\Delta, \theta_n=n) 
 = \P(S_n^{(a)}=y\Delta, n \text{ is a strict ascending ladder epoch})\nonumber\\
  &\hspace{1cm}=\sum_{k=1}^\infty \P(\chi^{(a)}_1+\chi^{(a)}_2+\dots + \chi^{(a)}_k=y\Delta, \tau_{+,1}^{(a)} + \tau_{+,2}^{(a)}+ \dots +\tau_{+,k}^{(a)}=n). \label{eq4.4}
 \end{align}
Denote the distribution function of $Z^{(a)}$ by $F^{(\mu^{(a)})}$, where $\mu^{(a)}=\E[Z^{(a)}]$,
and let $F^{(\mu^{(a)})}_k$ be the k-fold convolution of $F^{(\mu^{(a)})}$ with itself. 
Then, by using \eqref{eq4.4}, changing the order of summation and using the Markov property,
 \begin{align}
  &\sum_{n=1}^\infty \P(S_n^{(a)}=y\Delta, \theta_n=n)\nonumber\\ 
  &=\sum_{n=1}^\infty \sum_{k=1}^\infty \P(\chi^{(a)}_1+\chi^{(a)}_2+\dots + \chi^{(a)}_k=y\Delta, \tau_{+,1}^{(a)} + \tau_{+,2}^{(a)}+ \dots +\tau_{+,k}^{(a)}=n)\nonumber\\
  &=\sum_{k=1}^\infty \P(\chi^{(a)}_1+\chi^{(a)}_2+\dots + \chi^{(a)}_k=y\Delta | \tau_{+,k}^{(a)}<\infty)\P(\tau_{+,k}^{(a)}<\infty)\nonumber\\
  &=\sum_{k=1}^\infty A^k \left(F_k^{(\mu^{(a)})}(y\Delta)-F_k^{(\mu^{(a)})}((y-1)\Delta)\right) \label{eq4.5}
 \end{align}
with $A=\P(\tau_+^{(a)}<\infty)$. 
Combining results \eqref{repr_for_max} and \eqref{eq4.5} we attain
\begin{equation}
\label{geometric_sum}
 \P(M^{(a)}=y\Delta) 
 = \P(\tau_+^{(a)}=\infty)\sum_{k=1}^\infty A^k \left(F_k^{(\mu^{(a)})}(y\Delta)-F_k^{(\mu^{(a)})}((y-1)\Delta)\right).
\end{equation}
Next, we want to use Proposition \ref{P1} to determine the asymptotic behaviour of the sum on the right hand side of the latter equality.
Therefore, let us first show that under the assumptions of Theorem \ref{T1},
\begin{equation}
\label{weak_conv_Z}
 Z^{(a)} \xrightarrow[]{w} Z^{(0)}
\end{equation}
 as $a\to 0$. It is known that
\begin{equation}
 \P(\tau_+^{(a)}<\infty) \sim \P(\tau_+<\infty) = 1. \label{tau_a_to_tau}
\end{equation}
Thus, as $a\to 0$,
\begin{align*}
 \P(Z^{(a)}>x) 
 &=\frac{\P(\chi^{(a)}>x, \tau_+^{(a)}<\infty)}{\P(\tau_+^{(a)}<\infty)}
 \sim \P(\chi^{(a)}>x, \tau_+^{(a)}<\infty) 
\end{align*}
and, on the other hand, \eqref{assumption_weak_conv} and \eqref{tau_a_to_tau} imply that for every $R>0$, as $a\to 0$,
\begin{align*}
 \P(\chi^{(a)}>x, R<\tau_+^{(a)}< \infty) 
 &\le \P(R<\tau_+^{(a)}<\infty)\\
 &=\P(\tau_+^{(a)}<\infty)-\P(\tau_+^{(a)}\le  R)
 \sim \P(\tau_+> R).
\end{align*}
Further, by using \eqref{assumption_weak_conv} and the continuous mapping theorem,
\begin{align*}
 &\P(\chi^{(a)}>x, \tau_+^{(a)}\le R) 
 =\sum_{k=0}^{R-1} \P\big(S_{k+1}^{(a)}>x,\max_{1\le l\le k} S_l^{(a)}\le 0\big)\\
 &\hspace{2cm}\sim \sum_{k=0}^{R-1} \P\big(S_{k+1}>x,\max_{1\le l\le k} S_l\le0\big)
 = \P(\chi>x, \tau_+\le R)
\end{align*}
as $a\to 0$. Thus,
\begin{equation*}
 \limsup_{a\to 0} \P(Z^{(a)}>x) 
 \le \P(\chi>x, \tau_+\le R) + \P(\tau_+> R)
\end{equation*}
and by letting $R\to \infty$ we conclude 
\begin{equation*}
 \limsup_{a\to 0} \P(Z^{(a)}>x) 
 \le  \P(\chi>x, \tau_+< \infty) = \P(Z^{(0)} > x).
\end{equation*}
On the other side, the above calculations give
\begin{align*}
 \liminf_{a\to 0} \P(Z^{(a)}>x) 
 \ge \liminf_{a\to 0} \P(\chi^{(a)}>x,\tau_+^{(a)}\le R)
 = \P(\chi>x,\tau_+\le R)
\end{align*}
and by letting $R\to \infty$,
$$
 \liminf_{a\to 0} \P(Z^{(a)}>x) 
 \ge \P(\chi>x,\tau_+< \infty)
 = \P(Z^{(0)}>x).
$$
This means that \eqref{weak_conv_Z} holds under our assumptions.

Due to relation (16) of Chow \cite{C86} there exists a constant $C$ such that
\begin{align*}
 \E[\big(S^{(a)}_{\tau_+^{(a)}}\big)^{1+\varepsilon};\tau_+^{(a)}<\infty]
 \le C \int_0^\infty\frac{u^{2+\varepsilon}}{\E\big[|S^{(a)}_{\tau_-^{(a)}}|\wedge u\big]} d\P(\max\{0,X^{(a)}\}<  u)
\end{align*}
Obviously, 
$$
 \E\big[|S^{(a)}_{\tau_-^{(a)}}|\wedge u\big] 
 \ge \E\big[|S^{(a)}_{\tau_-^{(a)}}|\wedge \Delta\big]
 \ge \P(S_1^{(a)}<0) > 0
$$
for all $u\ge \Delta$ and therefore 
$$
 \E[\big(S^{(a)}_{\tau_+^{(a)}}\big)^{1+\varepsilon};\tau_+^{(a)}<\infty]
 \le \frac{C}{\P(S_1<0)} \int_0^\infty u^{2+\varepsilon} d\P(\max\{0,X^{(a)}\}<  u).
$$
Hence, by virtue of \eqref{assumption_moment_ex},
\begin{equation}
\label{moment_ex_Z2}
  \sup_{a\le a_0} \E[(Z^{(a)})^{1+\varepsilon}]<\infty.
\end{equation}
The convergence from \eqref{weak_conv_Z} combined with \eqref{moment_ex_Z2} implies 
\begin{equation}
\label{conv_moment_Z}
 \mu^{(a)} \to \mu^{(0)}
\end{equation}
as $a\to 0$ by dominated convergence.
It is known 
that for all $a>0$ the stopping time $\tau_+^{(a)}$ is infinite with positive probability and that 
\begin{equation}
 \P(\tau_+^{(a)}=\infty) = 1/\E[\tau_-^{(a)}], \label{identity_tau_+}
\end{equation}
where $\tau_-^{(a)}=\min\{k\ge 1: S^{(a)}_k \le 0\}$ is the first weak descending ladder epoch.
Totally analoguously to \eqref{moment_ex_Z2}, one can use (15) from Chow \cite{C86} to show that 
the existence of the second moment in assumption \eqref{assumption_moment_ex} implies 
$\sup_{a\le a_0}\E[S^{(a)}_{\tau_-^{(a)}}]<\infty$. 
Hence, one can use dominated convergence to show that 
$$
 \E[S^{(a)}_{\tau_-^{(a)}}] \to \E[S_{\tau_-^{(0)}}]
$$ 
as $a\to 0$. 
Thus, using \eqref{identity_tau_+}, the known identity
\begin{equation}
\label{identity_for_variance}
 \frac{\sigma^2}{2} = -\mu^{(0)}\E[S_{\tau_-^{(0)}}]
\end{equation}
and Wald's identity imply that 
\begin{equation}
\label{wald}
 \P(\tau_+^{(a)}=\infty)=\frac{1}{\E[\tau_-^{(a)}]}\sim \frac{a}{-\E[S_{\tau_-^{(0)}}]}
 \sim \frac{2a\mu^{(0)}}{\sigma^2}.
\end{equation}
The assumption $ay=O(1)$ implies the existence of a constant $C$ such that $y \le C/a$. 
Therefore, by \eqref{wald},
\begin{equation}
 \P(\tau_+^{(a)}<\infty) \ge 1- \frac{3 C\mu^{(0)}}{\sigma^2 y} \label{eq4.6}
\end{equation}
for $a$ small enough.
Summing up the results from \eqref{conv_moment_Z} and \eqref{eq4.6},
this means that we can apply Proposition \ref{P1} for $I=\{\mu^{(a)}:0\le a \le a_0\}$ with 
$a_0>0$ small enough,
$A_y=1-3C\mu^{(0)}/(\sigma^2 y)$, 
$A=\P(\tau_+^{(a)}<\infty)$ and $s=1+\varepsilon$. 
Hence, 
\begin{equation}
\label{eq_nagaev}
 \sum_{k=1}^\infty A^k \left(F_k^{(\mu^{(a)})}(y\Delta)-F_k^{(\mu^{(a)})}((y-1)\Delta)\right)
 = \frac{\big(\lambda_{y\Delta}^{(a)}(A)\big)^{-y-1}}{A \mu_{y}^{(a)}(\lambda_{y\Delta}^{(a)}(A))} 
    + o(y^{-\min\{1,\varepsilon\}}\ln y)
\end{equation}
and consequently, by combining equations \eqref{geometric_sum}, \eqref{eq_nagaev}
and the fact that $1-A=O(a)$, we attain
\begin{align}
 \P(M^{(a)}=y\Delta)
 &= (1-A)\frac{\big(\lambda_{y\Delta}^{(a)}(A)\big)^{-y-1}}{A \mu_{y\Delta}^{(a)}(\lambda_{y\Delta}^{(a)}(A))} 
    + o(ay^{-\min\{1,\varepsilon\}}\ln y). \label{eq4.7}
\end{align}
Let us now determine $\lambda_{y\Delta}^{(a)}(A)$ and $\mu_{y}^{(a)}(\lambda_{y\Delta}^{(a)}(A))$. 
Write $\lambda_{y\Delta}$ and $\mu_{y}(\lambda_{y\Delta})$ instead of $\lambda_{y\Delta}^{(a)}(A)$ and 
$\mu_{y}^{(a)}(\lambda_{y\Delta}^{(a)}(A))$ respectively for abbreviation
and put $\lambda_{y\Delta}=e^{\theta_{y\Delta}}$. 
According to the definition of $\lambda_{y\Delta}$, we want to find $\theta_{y\Delta}$ such that 
\beq
\label{eq_1_0}
 \E[\exp\{\theta_{y\Delta} Z^{(a)}/\Delta\};Z^{(a)}\le y\Delta] = \frac{1}{A}. 
\eeq
It turns out we don't need an exact solution for this equation and it is sufficient to 
determine $\theta_y$ such that
\beq
\label{eq_1}
  \E[\exp\{\theta_{y\Delta} Z^{(a)}/\Delta\};Z^{(a)}\le y\Delta] = \frac{1}{A} + O(y^{-1-\varepsilon}). 
\eeq
By Taylor's formula,
\begin{align*}
 &\E[\exp\{\theta_{y\Delta} Z^{(a)}/\Delta\};Z^{(a)}\le y\Delta]\\
 &= 1+\frac{\theta_{y\Delta} \mu^{(a)}}{\Delta} -\P(Z^{(a)}>y\Delta)
  -\frac{\theta_{y\Delta}}{\Delta} \E[Z^{(a)};Z^{(a)}>y\Delta]\\ 
 &\hspace{4cm}+\frac{\theta_{y\Delta}^2}{2\Delta^2}\E[(Z^{(a)})^2 \exp\{\gamma \theta_{y\Delta}  Z^{(a)}/\Delta\};Z^{(a)}\le y\Delta]
\end{align*}
with some random $\gamma\in (-\infty,1]$. We restrict ourselves to $\theta_{y\Delta}$ such that $\theta_{y\Delta}=O(1/y)$. 
Then, \eqref{moment_ex_Z2} implies 
$$
 \P(Z^{(a)}>y\Delta) + \frac{\theta_{y\Delta}}{\Delta} \E[Z^{(a)};Z^{(a)}>y\Delta] = O(y^{-1-\varepsilon})
$$
and 
\begin{align*}
 &\frac{\theta_{y\Delta}^2}{2\Delta^2}\E[(Z^{(a)})^2 \exp\{\gamma \theta_{y\Delta} Z^{(a)}\};Z^{(a)}\le y\Delta] \\
 &\hspace{4cm}= O\left(\theta_{y\Delta}^2 E[(Z^{(a)})^2;Z^{(a)}\le y\Delta]\right) 
 = O(y^{-1-\varepsilon}).
\end{align*}
This means that to find $\theta_y$ that suffices \eqref{eq_1}, it is sufficient to choose $\theta_y$ such that
$$
 1+\frac{\theta_{y\Delta} \mu^{(a)}}{\Delta} = \frac{1}{A} + O(y^{-1-\varepsilon})
$$
or
$$
 \theta_{y\Delta} = \frac{(1-A)\Delta}{A \mu^{(a)} } + O(y^{-1-\varepsilon}).
$$
Consequently, 
\beq
\label{lambda}
 \lambda_{y\Delta}= \exp\left\{\frac{(1-A)\Delta}{A\mu^{(a)} }+O(y^{-1-\varepsilon})\right\}.
\eeq
Further,
\begin{align*}
 &\mu_{y\Delta}^{(a)}(\lambda_{y\Delta}) 
 = \sum_{k=1}^y k f_{k\Delta}^{(a)} \lambda_{y\Delta}^{k-1} 
 = \frac{1}{\Delta\lambda_{y\Delta}} \E[Z^{(a)} \exp\{\theta_{y\Delta} Z^{(a)}/\Delta\}; Z^{(a)}\le y\Delta]\\
 &= \frac{1}{\Delta\lambda_{y\Delta}}\left\{\E[Z^{(a)};Z^{(a)}\le y\Delta]+ \frac{\theta_{y\Delta}}{\Delta} \E[(Z^{(a)})^2\exp\{\tilde{\gamma} \theta_{y\Delta} Z^{(a)}/\Delta\};Z^{(a)}\le y\Delta]\right\}
\end{align*}
for some random $\tilde{\gamma} \in (-\infty,1]$. 
For all $\theta_{y\Delta}=O(1/y)$ the result \eqref{moment_ex_Z2} gives
$$
 \E[(Z^{(a)})^2\exp\{\tilde{\gamma} \theta_y Z^{(a)}/\Delta\};Z^{(a)}\le y\Delta] = O(y^{1-\varepsilon})
$$
and 
$$
 \E[Z^{(a)};Z^{(a)}\le y\Delta]= \mu^{(a)} + O(y^{-\varepsilon}).
$$
Consequently,
\beq
\label{mu}
 \mu_{y\Delta}^{(a)}(\lambda_{y\Delta}) 
 = \frac{\mu^{(a)}}{\Delta\lambda_{y\Delta}} +O(y^{-\varepsilon}).
\eeq
Plugging the results from \eqref{lambda} and \eqref{mu} into the right hand side of \eqref{eq4.7}, 
we obtain by regarding $1-A=O(a)$,
\begin{align}
 &\P(M^{(a)}=y\Delta)\nonumber\\
 &=\frac{(1-A)\Delta}{A \mu^{(a)}+O(y^{-\varepsilon})}\exp\left\{-\frac{(1-A)y\Delta}{A\mu^{(a)}}+O(y^{-\varepsilon})\right\}
   + o(ay^{-\min\{1,\varepsilon\}}\ln y)\nonumber\\
  &= \frac{(1-A)\Delta}{A \mu^{(a)}+O(y^{-\varepsilon})}\exp\left\{-\frac{(1-A)y\Delta}{A\mu^{(a)}}\right\} + o(ay^{-\min\{1,\varepsilon\}}\ln y)\nonumber\\
  &= \frac{(1-A)\Delta}{A\mu^{(a)}}\exp\left\{-\frac{(1-A)y\Delta}{A\mu^{(a)}}\right\}
  + o(ay^{-\min\{1,\varepsilon\}}\ln y) +O(ay^{-\varepsilon}) \label{order_with_small_o}
\end{align}
uniformly for all $y$ such that $ay=O(1)$ as $a\to 0$.
Here, we applied Taylor's formula in the last line.
As a consequence of \eqref{tau_a_to_tau}, \eqref{conv_moment_Z} and \eqref{wald}, 
$$
 \frac{1-A}{A\mu^{(a)}} 
 = \frac{2a}{\sigma^2} +o(a)
$$
and hence, by plugging this result into \eqref{order_with_small_o}, we finally obtain
\begin{align*}
  \P(M^{(a)}=y\Delta)
  \sim \frac{2a\Delta}{\sigma^2} \exp\left\{-\frac{2ay\Delta}{\sigma^2}\right\}
\end{align*}
uniformly for all $y$ such that $y\to \infty$ and $ya=O(1)$ as $a\to 0$.


\vspace{12pt}
{\bf Acknowledgement.} I am grateful to Vitali Wachtel for useful ideas and references.



\begin{thebibliography}{99}

\bibitem{A00} Asmussen, S. 
\newblock Ruin probabilities, 
\newblock {\em World Scientific Publishing Co. Pte. Ltd.}, 2000. 

\bibitem{BL14} Blanchet, J. and Lam, D.  
\newblock Uniform Large Deviations for Heavy-Tailed Queues under Heavy Traffic.
\newblock {\em Bull. of the Mex. Math. Soc. (3)}, 19, 2013.

\bibitem{BG07} Blanchet, J. and Glynn, P.  
\newblock Uniform renewal theory with applications to expansions of random geometric sums.
\newblock {\em Adv. in Appl. Probab.}, 4: 1070--1097, 2007.

\bibitem{C86} Chow, Y.S.
\newblock On Moments of Ladder Height Variables.
\newblock {\em Adv. in Appl. Math.}, 7:46--54, 1986

\bibitem{DK14} Denisov, D. and Kugler, J. 
\newblock Heavy traffic and heavy tails for subexponential distributions.
\newblock {\em to appear}, 2014.


\bibitem{F71} Feller, W. 
\newblock An introduction to probability theory and its applications, volume II, 2nd ed.
\newblock {\em John Wiley \& Sons, Inc.,} NewYork-London-Sydney, 1971. 


\bibitem{K61} Kingman, J. 
\newblock The single server queue in heavy traffic.
\newblock {\em Proc. Camb. Phil. Soc.,} 57:902--904, 1961.

\bibitem{N68} Nagaev, S.V.  
\newblock Some renewal theorems.
\newblock {\em Theory Probab. Appl.,} 13:547--563, 1983.


\bibitem{OBG} Olvera-Cravioto, M., Blanchet, J. and Glynn, P. 
On the transition from heavy traffic to heavy tails for the $M/G/1$ queue:
the regularly varying case.
\newblock {Ann. Appl. Prob.}, 21:645--668, 2011.

\bibitem{P63} Prokhorov, Ju.V. 
\newblock Transition phenomena in queueing processes. I. (Russian)
\newblock {\em Litovsk. Mat. Sb.}, 3:199--205, 1963. 


\bibitem{T67} Taibleson, M. 
\newblock Fourier Coefficients of Functions of Bounded Variation.
\newblock {\em Proc. of the Am. Math. Soc.,} Vol. 18, 4: 766, 1967.



\bibitem{WS11} Wachtel, V. and Shneer, S.
\newblock Heavy-traffic analysis of the maximum of an asymptotically stable random walk..
\newblock {\em Theory Probab. Appl.,} 55: 332--341, 2011.


\end{thebibliography}
\end{document}